\newcommand{\diag}[1]{\mathrm{diag}(#1)}
\title{On the convergence conditions of Laplace importance sampling with randomized quasi-Monte Carlo\thanks{Submitted to the editors DATE.
		\funding{This work of the first author was funded by the National Science Foundation of China (No. 12071154)}}}
\author{Zhan Zheng\thanks{Corresponding author. Department of Mathematical Sciences, Tsinghua University, Beijing 100084, People's Republic of China (\email{zhengz15@mails.tsinghua.edu.cn}).}
\and Hejin Wang\thanks{Department of Mathematical Sciences, Tsinghua University, Beijing 100084, People's Republic of China (\email{wanghj20@mails.tsinghua.edu.cn}).}	
\and Xiaoqun Wang\thanks{Department of Mathematical Sciences, Tsinghua University, Beijing 100084, People's Republic of China (\email{wangxiaoqun@mail.tsinghua.edu.cn}).}}
\begin{document}
	
	\maketitle
	
	\begin{abstract}
	    Monte Carlo (MC) methods, renowned for their dimension-independent convergence rate of $O(N^{-1/2})$, are pivotal in computational problems. However, this rate might not always meet practical requirements. This study delves into amalgamating quasi-Monte Carlo (QMC) methods and importance sampling (IS) to enhance this rate. QMC methods, deterministic counterparts of MC, utilize low-discrepancy sequences and have found extensive applications in finance and statistics over the past three decades. Under specific conditions, QMC methods achieve an error bound of $O(\frac{{\log N}^d}{N})$ for d-dimensional integrals, surpassing the conventional MC rate. The study further explores randomized QMC (RQMC), which maintains the QMC convergence rate and facilitates computational efficiency analysis. Emphasis is laid on integrating randomly shifted lattice rules, a distinct RQMC quadrature, with IS—a classic variance reduction technique. The study underscores the intricacies of establishing a theoretical convergence rate for IS in QMC compared to MC, given the influence of problem dimensions and smoothness on QMC. The research also touches on the significance of IS density selection and its potential implications. The study culminates in examining the error bound of IS with a randomly shifted lattice rule, drawing inspiration from the reproducing kernel Hilbert space (RKHS). In the realm of finance and statistics, many problems boil down to computing expectations, predominantly integrals concerning a Gaussian measure. This study considers optimal drift importance sampling (ODIS) and Laplace importance sampling (LapIS) as common importance densities. Conclusively, the paper establishes that under certain conditions, the IS-randomly shifted lattice rule can achieve a near $O(N^{-1})$ error bound. 
	\end{abstract}
	
	\begin{keywords}
		Importance sampling, Quasi-Monte Carlo, Lattice rules
	\end{keywords} 
	
	\begin{AMS}
		41A63, 65D30, 97N40
	\end{AMS}
	
	\section{Introduction}
	 Monte Carlo (MC) techniques are prevalent in numerous computational challenges. A salient feature of MC is its classic $O(N^{-1/2})$ convergence rate for square-integrable integrands, given a sample size n. While this rate is dimension-agnostic, it might not always meet practical application demands. To address this, we integrate quasi-Monte Carlo (QMC) approaches with importance sampling (IS) to bolster the convergence speed.
    QMC strategies, deterministic counterparts of MC, leverage low-discrepancy sequences. Over the past three decades, their application has increased in scientific computation, notably within the finance and statistics domains. Under specific regularity conditions, the QMC quadrature achieves an error bound of $O(N^{-1+\epsilon})$ for any $\epsilon>0$ in the dimensional integral, outperforming the standard MC rate. In practice, randomized QMC (RQMC) is frequently employed. It retains QMC's convergence attributes with the added benefit of facilitating computational efficiency analysis, such as error estimation or confidence interval construction. Various RQMC quadratures exist, including scrambled digital nets and randomly shifted lattice rules. Comprehensive insights into QMC and RQMC can be found in referenced works. In prior research, we melded scrambled digital nets with IS, establishing that an approximate $O(N^{-3/2+\epsilon})$ mean square error is achievable given specific integrand boundary growth conditions and optimal IS density selection. This study pivots on randomly shifted lattice rules, a distinct RQMC variant.
    We focus on the synergy between the randomly shifted lattice rule and IS. Recognized as a pivotal variance reduction technique in MC literature, IS's application in security pricing was explored in earlier studies. Subsequent research over the years underscored its efficacy in addressing rare events. It became evident that IS's utility extended beyond the conventional variance reduction method. 
    Sampling from a desired distribution often differs from the original problem's focus. The efficacy of Importance Sampling (IS) is intrinsically tied to the choice of importance density, underscoring the significance of our selection. A comprehensive review of IS is available in \cite{Owen2013}.

    Incorporating IS within the Quasi-Monte Carlo (QMC) quadrature complicates the derivation of a theoretical convergence rate, especially when juxtaposed with Monte Carlo (MC). While MC remains largely indifferent to problem dimensions and smoothness (barring square-integrability), these factors profoundly impact QMC. Kuo \cite{Kuo2008a} demonstrated QMC's superiority over MC for log-likelihood integrals, and Dick et al.\cite{Dick2019} elucidated a weighted discrepancy bound for QMC-based IS, deriving explicit error bounds for adequately regular integrands. Our prior work highlighted potential boundary singularities in the unit hypercube due to IS density, an effect traceable to specific matrix eigenvalues. This insight spurred our exploration of error bounds for IS combined with randomly shifted lattice rules. However, our primary theoretical foundations are rooted in the Reproducing Kernel Hilbert Space (RKHS), a concept not applicable to scrambling.

    Many computational challenges in finance and statistics revolve around expectation calculations, often manifesting as Gaussian measure integrals. Common scenarios include Brownian motion-driven assets in security pricing or Gaussian priors in Bayesian computations. In determining importance density, two prevalent choices emerge, namely the Optimal Drift Importance Sampling (ODIS) and the Laplace Importance Sampling (LapIS). While ODIS employs a multivariate normal density with the original covariance matrix, LapIS opts for a general multivariate normal density, aligning its mean and covariance matrix with the integrand's mode and curvature. We also consider the multivariate t distribution as an IS proposal. Our findings indicate that the IS-randomly shifted lattice rule approximates an error bound under specific conditions.
    The paper's structure is as follows: Section 2 offers an overview of MC, QMC, IS, ODIS, LapIS, and RKHS. Section 3 delineates the theoretical error rate for RQMC-based IS estimators and extends the IS density to the t distribution. Section 4 presents examples illustrating the effects of ODIS and LapIS, and Section 5 concludes our study.
		  
	\section{Preliminaries}\label{sec:pre}
	
        \subsection{Monte Carlo and quasi-Monte Carlo method}
        Consider an integral on the unit hypercube 
        \begin{equation}
          I(g) = \int_{(0,1)^d} g(\bm z) d\bm z.
        \end{equation}
        For numerical computation, we take
        \begin{equation}\label{eq:plainsim}
          \hat{I}_N(g) = \frac 1N \sum_{i=1}^N g(\bm z_i) 
        \end{equation}
        to simulate the integral. In MC setting, the $\bm z_i'$s are randomly sampled points in the unit hypercube. If the variance of the integrand $g(\bm z)$ is finite, i.e.
        \begin{equation}\label{eq:MCvariance}
          \sigma^2 := \int_{(0,1)^d} (g(\bm z) -I(g))^2 d\bm z < \infty,
        \end{equation}
        then by central limit theorem, the root mean square error (RMSE) of MC is
        \begin{equation}
          	\sqrt{\mathbb{E}[(\hat{I}_N(g)-I(g))^2]}=\frac {\sigma}{\sqrt{N}}.
        \end{equation}
        In other words, MC has an RMSE rate $O(N^{-1/2})$ if the integrand is square-integrable. 
        
        To speed up the convergence rate, we use QMC quadrature rule to replace MC. QMC takes the same form of~\eqref{eq:plainsim}, whereas $\{\bm z_1,\dots,\bm z_N\}:=\mathcal{P}$ is a deterministic low-discrepancy point set in $(0,1)^d$ rather than independent and identically distributed (i.i.d.) points for MC. In this paper, we use lattice approach to constructing low-discrepancy point sets. The typical error bound of QMC integration is given by Koksma-Hlawka inequality \cite{Niederreiter1992}
        \begin{equation}\label{eq:hk}
            |\hat{I}_N(g)-I(g)| \leq D^*(\mathcal{P}) V_\mathrm{HK}(g).
        \end{equation}
        where $D^*(\mathcal{P})$ denotes the star discrepancy of the point set $\mathcal{P}$, while $V_\mathrm{HK}(\cdot)$ is the variation (in the sense of Hardy and Krause) of a function defined over the unit hypercube. Low-discrepancy sequences achieve a star discrepancy $O(N^{-1}(\log N)^d)$, therefore, as long as the variation is bounded, QMC integration has a deterministic error bound of $O(N^{-1}(\log N)^d)$, which is asymptotically superior to that of MC for a fixed dimension $d$.
        
	    \subsection{Importance sampling}
	    In this paper, we study integrals with respect to the normal density
	    \begin{equation}
	    	\label{eq:targetori}
	    	C = \int_{\mathbb{R}^d} G(\bm z)p(\bm z;\bm \mu_0,\bm \Sigma_0)d\bm z,
	    \end{equation}
	    where $p(\bm z;\bm \mu,\bm \Sigma)$ denotes the probability density function of $d$-dimensional normal distribution with the mean $\bm \mu$ and the covariance matrix $\bm \Sigma$. Performing IS is using a proposal function $q$ (under some regular conditions) to change the integral into
	    \begin{eqnarray}
	    	\label{eq:ISgenform}
	    	C &=& \int_{\mathbb{R}^d} G(\bm z) \frac{p(\bm z;\bm \mu_0,\bm \Sigma_0)}{q(\bm z)} q(\bm z) d\bm z \nonumber \\
	    	&=& \int_{\mathbb{R}^d} G(\bm z) W(\bm z) q(\bm z) d\bm z \nonumber \\
	    	&=& \int_{\mathbb{R}^d} G_{IS}(\bm z) q(\bm z) d\bm z,
	    \end{eqnarray}
	    where
	    \begin{equation}\label{eq:LRf}
	    	W(\bm z) = \frac{p(\bm z;\bm \mu_0,\bm \Sigma_0)}{q(\bm z)}
	    \end{equation}    
	    is the likelihood ratio (LR) function, $G_{IS}(\bm z)=G(\bm z) W(\bm z)$ is the IS integrand. 
	    
	    Firstly, we choose IS density among normal densities, i.e., $q(\bm z)=p(\bm z;\bm \mu,\bm \Sigma)$, yielding to
	    \begin{equation}\label{eq:LRnornorform}
	    	W(\bm z) = \sqrt{\frac{\det (\bm \Sigma)}{\det (\bm \Sigma_0)}}\exp(\frac 12 (\bm z-\bm \mu)^T\Sigma^{-1}(\bm z-\bm \mu)-\frac 12 (\bm z-\bm \mu_0)^T\Sigma_0^{-1}(\bm z-\bm \mu_0))
	    \end{equation}
	    To associate IS with QMC method, we need to perform
	    \begin{equation}
	    	\int_{\mathbb{R}^d} F(\bm z) p(\bm z;\bm 0,\bm I_d) d\bm z = \int_{(0,1)^d} F(\Phi^{-1}(\bm u)) d\bm u,
	    \end{equation} 
	    where $\Phi(\cdot)$ denotes the cumulative distribution function (CDF) of the $d$-dimensional standard normal distribution, $\Phi^{-1}(\cdot)$ is its inverse (applied componentwise). By the affine transformation property of normal distribution, we write
	    \begin{eqnarray}
	    	\label{eq:ISnorform}
	    	C &=& \int_{\mathbb{R}^d} G(\bm \mu+\bm L\bm z) W(\bm z)p(\bm z;\bm 0,\bm I_d)d\bm z\nonumber \\
	    	&=& \int_{\mathbb{R}^d} G_{IS}(\bm z) p(\bm z;\bm 0,\bm I_d) d\bm z \nonumber\\
	    	&=& \int_{(0,1)^d} G_{IS}(\Phi^{-1}(\bm u)) d\bm u,
	    \end{eqnarray}
	    where $G_{IS}(\bm z)=G(\bm \mu+\bm L \bm z)W(\bm z)$,$\bm L$ is any square root decomposition of covariance matrix $\bm \Sigma$ (satisfying $\bm \Sigma=\bm L \bm L^T$), and the LR function is now
	    \begin{equation}\label{eq:LRstd}
	    	W(\bm z) = \sqrt{\frac{\det (\bm \Sigma)}{\det (\bm \Sigma_0)}}\exp(\frac 12 \bm z^T \bm z-\frac 12 (\bm \mu-\bm \mu_0+\bm L \bm z)^T\Sigma_0^{-1}(\bm \mu-\bm \mu_0+\bm L \bm z)).
	    \end{equation}
	    It should be noted that one suffices to take $\bm \mu=\bm \mu_0$ and $\bm \Sigma=\bm \Sigma_0$ if IS is not applied.
	    
	    It is clear that a good choice of parameters $\bm \mu$ and $\bm \Sigma$ may reduce the calculation error. For MC simulation, that is equivalent to reducing the variance $\sigma^2$ given by~\eqref{eq:MCvariance}. If we suppose that $G$ is nonnegative for all $\bm z$, apparently the zero-variance IS density is
	    \begin{equation}
	    p_{opt}(\bm z) := \frac{1}{C}G(\bm z)p(\bm z;\bm \mu_0,\bm \Sigma_0).
	    \end{equation}
	    However, it is not practical since the unknown integral $C$ is involved. Nevertheless, we are inspired to select an IS density which mimics the behavior of $p_{opt}$.
	    
	    Let $H(\bm z) =\log (p_{opt}(\bm z))$. Assume that $H(\bm z)$ is differentiable and unimodal. Let $\bm \mu_{\star}$ be the mode of $H(\bm z)$, i.e.,
	    \begin{equation}
	    \bm \mu_{\star} = \arg \max_{\bm z \in \mathbb{R}^d} H(\bm z)= \arg \max_{\bm z \in \mathbb{R}^d} G(\bm z)p(\bm z;\bm \mu_0,\bm \Sigma_0).
	    \end{equation}
	    Taking a second-order Taylor approximation around the mode $\bm \mu_{\star}$ gives
	    \begin{equation}
	    H(\bm y) \approx H(\bm \mu_{\star}) - \frac{1}{2}(\bm y - \bm \mu_{\star})^T \bm \Sigma_{\star}^{-1}(\bm y - \bm \mu_{\star}),
	    \end{equation}
	    where
	    \begin{equation}
	    	\label{eq:solveoptvar}
	    	\bm \Sigma_{\star} = -\nabla^2 H(\bm \mu_{\star})^{-1} = (\bm\Sigma_0 - \nabla^2\log(G(\bm \mu_{\star})))^{-1}.
	    \end{equation}
	    Therefore, we obtain
	    \begin{equation}
	    	p_{opt}(\bm z) = \exp (H(\bm z)) \approx \exp\left(H(\bm \mu_{\star})-\frac{1}{2}(\bm z - \bm \mu_{\star})^T \bm \Sigma_{\star}^{-1}(\bm z - \bm \mu _{\star})\right)\propto p(\bm z;\bm \mu_{\star},\bm \Sigma_{\star}). 
	    \end{equation}
	    In other words, $p(\bm z;\bm \mu_{\star},\bm \Sigma_{\star})$ is a density which is, in a sense, close to the optimal one. Then, LapIS just takes $\bm \mu= \bm \mu_{\star}$ and $\bm \Sigma=\bm \Sigma_{\star}$. Meanwhile, ODIS chooses $\bm \mu= \bm \mu_{\star}$ but keeps $\bm \Sigma=\bm \Sigma_0$ unchanged.
	    We should note that LapIS is not necessarily better than ODIS (see e.g.,\cite{Zhang2021}\cite{He2023}). Generally speaking, in MC setting LapIS tends to effective if $p_{opt}(\bm z)$ is close to a normal density. For a QMC point of view, it becomes more complicated since the performance is in addition affected by other factors, for example, the boundary growth for scrambled digital nets. We try to analyze this question for randomly shifted lattice rule in the following sections.
	    
	    \subsection{Reproducing kernel Hilbert space}
	    RKHS is a suitable function space for error analysis, which can be reviewed from Aronszajn\cite{Aronszajn1950}. We refer to \cite{Kuo2006,Waterhouse2006,Nichols2014} for recent work.
	    
	    Consider integrals with respect to probability density which is multiplied by 1-dimension marginal density. That is
	    \begin{equation}\label{eq:martype}
	    	C(f) = \int_{\mathbb{R}^d} f(\bm x) \prod_{i=1}^d \phi(x_i)d\bm x.
	    \end{equation}
	    Let $\Phi$ be the cumulative probability function of $\phi$:
	    \begin{equation}
	    	\Phi(x) = \int_{-\infty}^x \phi(y) dy,
	    \end{equation}
	    $\Phi^{-1}$ is its inverse. For example, if $\phi(x)=p(x;0,1)$, $\Phi$ is then the cumulative probability function of standard normal distribution. Assume that $\mathcal{F}$ is an Hilbert space defined on functionals over $\mathbb{R}^d$. Then we have an isometric space $\mathcal{G}$ defined on functionals over $(0,1)^d$ by
	    \begin{equation}\label{eq:isometry}
	    	g=f\circ\Phi^{-1}.
	    \end{equation}
	    Suppose that $K_{\mathcal{F}}$ is a reproducing kernel, which means a function $K_{\mathcal{F}}:\mathbb{R}^d \times \mathbb{R}^d \rightarrow \mathbb{R}$ satisfies the following 3 properties:
	    \begin{itemize}
	    	\item property of functional: $K_{\mathcal{F}}(\cdot,\bm x) \in \mathcal{F}$ for all $\bm x \in \mathbb{R}^d$. \\
	    	\item property of symmetry: $K_{\mathcal{F}}(\bm x,\bm y) = K_{\mathcal{F}}(\bm y,\bm x)$ for all $\bm x,\bm y \in \mathbb{R}^d$. \\
	    	\item property of reproducing:$<f,K_{\mathcal{F}}(\cdot,\bm x)>=f(\bm x)$ for all $\bm x \in \mathbb{R}^d$ and $f \in \mathcal{F}$.	    	
	    \end{itemize} 
	    Furthermore, we need to assume that
	    \begin{equation}\label{eq:conditionofRKHS}
	    	K_{\mathcal{F}}(\bm x,\bm x) \prod_{i=1}^d \phi(x_i)d\bm x < \infty.
     	\end{equation}
     	This is due to
	    \begin{equation}\label{eq:RKHSform}
	    	C(f) = \int_{\mathbb{R}^d} <f,K_{\mathcal{F}}(\cdot,\bm x)> \prod_{i=1}^d \phi(x_i)d\bm x = <f,h>,
	    \end{equation}
	    where
	    \begin{equation}\label{eq:representation}
	    	h(\bm x)=\int_{\mathbb{R}^d} K_{\mathcal{F}}(\bm x,\bm y) \prod_{i=1}^d \phi(y_i)d\bm y.
	    \end{equation}
	    ~\eqref{eq:conditionofRKHS} implies that~\eqref{eq:RKHSform} and ~\eqref{eq:representation} are well-defined. Then the initial error is given by
	    \begin{equation}
	    	e_0(\mathcal{F}) = \sup_{||f||_{\mathcal{F}}||\leq 1} |C(f)|= \sqrt{<h,h>},
	    \end{equation}
	    and square both sides to obtain
	    \begin{equation}
	    	|e_0(\mathcal{F})|^2 = <h,h> \int_{\mathbb{R}^d}\int_{\mathbb{R}^d} K_{\mathcal{F}}(\bm x,\bm y) \prod_{i=1}^d [\phi(x_i)\phi(y_i)] d\bm x d\bm y.
	    \end{equation}	    
	    Recall the isometry space $\mathcal{G}$ of $\mathcal{F}$ (\ref{eq:isometry}), the integration remains the same:
	    \begin{equation}\label{eq:sameint}
	        C(f) = \int_{(0,1)^d} g(\bm u) d\bm u :=I(g).
	    \end{equation}
	    In addition, $\mathcal{G}$ is also a RKHS equipped with reproducing kernel
	    \begin{equation}
	    	K_{\mathcal{G}}(\bm u,\bm v)=K_{\mathcal{F}}(\Phi^{-1}(\bm u),\Phi^{-1}(\bm v))
	    \end{equation}
	    Thus we have the same initial error
	    \begin{equation}
	        e_0(g) = \sup_{||g||_{\mathcal{G}}||\leq 1} |I(g))| = \sup_{||f||_{\mathcal{F}}||\leq 1} |C(f)| = e_0(f).
	    \end{equation}
	    Now consider a QMC rule to estimate the integral~\eqref{eq:sameint}
	    \begin{equation}
	    	\hat{I}_N(g)=\frac 1N \sum_{i=1}^N g(\bm u_i),
	    \end{equation}
	    then the worst-case error in the space $\mathcal{G}$ is defined as
	    \begin{equation}
	    	e_w(\hat{I}_N,\mathcal{G})=\sup_{||g||_{\mathcal{G}}\leq 1} |\hat{I}_N(g)-I(g)|.
	    \end{equation}
	    Therefore, we derive the relationship between the worst-case error above and the original integral
	    \begin{equation}
	    	|\hat{I}_N(f\circ\Phi^{-1})-C(f)|=|\hat{I}_N(g)-I(g)|\leq	e_w(\hat{I}_N,\mathcal{G})||g||_{\mathcal{G}}=e_w(\hat{I}_N,\mathcal{G})||f||_{\mathcal{F}}.
	    \end{equation}
	    Furthermore, randomly shifted lattice rule ia applied in this paper. Thus the points $\bm u_i,i=1,2,...,N$ of the shifted rank-1 lattice rule are 
	    \begin{equation}
	    	u_i=\{\frac{iz}{N}+\zeta\},i=1,2,...,N.
	    \end{equation}
	    where $\{\cdot\}$ takes the fractional part (component-wise to a vector), $\bm \zeta \in (0,1)^d$ is sampled from i.i.d. vectors, $\bm z \in \mathcal{Z}_N^d$ denotes the generating vector, the notation
	    \begin{equation}
	    	\mathcal{Z}_N = \{z\in \mathbb{Z}|1\leq z\leq N-1, (z,N)=1\},
	    \end{equation}
	    i.e. the set of positive integers no more than $N-1$ which are relatively prime to $N$. Finally, we express the shift-averaged worst-case error, which only depends on the generating vector (see \cite{Sloan2002})
	    \begin{eqnarray}
	    	|e_N^{sa}(\bm z)|^2 &=& \int_{(0,1)^d} |e_w(\hat{I}_N,\mathcal{G})|^2 d\bm\zeta \nonumber \\
	    	&=& -\int_{(0,1)^d}\int_{(0,1)^d} K_{\mathcal{G}}(\bm u,\bm v)d\bm ud\bm v+\frac 1N \sum_{i=1}^N K_{\mathcal{G}}^{sik}(\{\frac{i\bm z}{N}\}),
	    \end{eqnarray}
	    where the last term $K_{\mathcal{G}}^{sik}$ denotes the shift-invariant kernel
	    \begin{equation}
	    	K_{\mathcal{G}}^{sik}(\bm u,\bm v) = \int_{(0,1)^d} K_{\mathcal{G}}(\{\bm u+\bm \zeta\},\{\bm v+\bm \zeta\})d\bm \zeta. 
	    \end{equation}
	    Note that it depends on the difference of $\bm u$ and $\bm v$, for simplicity we write
	    \begin{equation}\label{eq:sik}
	    	K_{\mathcal{G}}^{sik}(\{\bm u-\bm v\}):=K_{\mathcal{G}}^{sik}(\{\bm u-\bm v\},\bm 0)=K_{\mathcal{G}}^{sik}(\bm u,\bm v) 
	    \end{equation}
	    Shift-averaged worst-case error is the main target for error analysis of randomly shifted lattice rule. Here we introduce Fourier analysis to illustrate how to apply the theory above.
	    
	    For linear functional space $\mathcal{F}$ over $\mathbb{R}^d$, given weight function $\psi$ and weight coefficients $\gamma_k>0$, $k=1,2,...,d$, we define the inner product
	    \begin{equation}\label{eq:multiinnerprod}
	    	<f,g>_{\mathcal{F}}=\sum_{u \subset 1:d} \Big(\prod_{k \in u} \frac{1}{\gamma_k} \int_{\mathbb{R}^{|u|}} \frac{\partial^{|u|}f}{\partial \bm x_u}(\bm x_u,\bm 0) \frac{\partial^{|u|}g}{\partial \bm x_u}(\bm x_u,\bm 0) \prod_{k \in u} \psi^2(x_k) d\bm x_u \Big),
	    \end{equation}
	    where $(\bm x_u,\bm 0)$ denotes a $d$-dimensional vector whose $k$'th component takes $x_k$ if $k \in u$, otherwise takes $0$. Now the reproducing kernel of each component is given by
	    \begin{equation}\label{eq:conofRKHS}
	    	\int_{R} K_{\mathcal{F},j}(y,y) \phi(y) dy < \infty,
	    \end{equation}
	    where $K_{\mathcal{F},j}$ depends on $\gamma_j$
	    \begin{equation}\label{eq:rkj}
	    	K_{\mathcal{F},j}(x,y)=1+\gamma_j\eta(x,y),
	    \end{equation}
	    \begin{equation}\label{eq:etaexpression}
	    	\eta(x,y)= 
	    	\begin{cases}
	    		\int_0^{\max(x,y)} \frac{1}{\psi^2(t)}dt,	& x,y>0,\\
	    		\int_{\max(x,y)}^0 \frac{1}{\psi^2(t)}dt,	& x,y<0,\\
	    		0,	& \text{otherwise}.
	    	\end{cases}
	    \end{equation}
	    Then the shift-invariant kernel~\ref{eq:sik} becomes (see \cite{Kuo2006}) 
	    \begin{equation}\label{eq:sifourierform}
	    	K_{\mathcal{G}}^{sa}(\bm u,\bm v) = \sum_{u \in 1:d} \gamma_u \prod_{i \in u}  \theta(\{u_i-v_i\})), \quad \bm u,\bm v \in (0,1)^d,
	    \end{equation}
	    where
	    \begin{equation}\label{eq:fourierform}
	    	\theta(\bm x)= \int_{\Phi^{-1}(x)}^0 \frac{\Phi(t)-x}{\psi^2(t)}dt +\int_{\Omega^{-1}(1-x)}^0 \frac{\Phi(t)-1+x}{\psi^2(t)}dt,\quad x \in (0,1).
	    \end{equation}
	    Expand~\eqref{eq:fourierform} by Fourier series, we have
	    \begin{equation}\label{eq:fourierexpansion}
	    	\theta(\bm x) = \sum_{h \in \mathbb{Z}} \hat{\theta}(h)\exp(2\pi ihx),
	    \end{equation}
	    where the Fourier coefficients
	    \begin{equation}\label{eq:invfourierexpansion}
	    	\hat{\theta}(h) =\int_0^1 \theta(x)\exp(-2\pi ihx) dx.
	    \end{equation}
	    Set 
	    \begin{equation}\label{eq:C1expansion}
	    	C_1:=\int_{-\infty}^0 \frac{\Phi(t)}{\psi^2(t)}dt + \int_0^{\infty} \frac{1-\Phi(t)}{\psi^2(t)}dt.
	    \end{equation}
	    Clearly
	    \begin{equation}
	    	C_1=\theta(0)=\sum_{h \in \mathbb{Z}} \hat{\theta}(h).
	    \end{equation}
	    To avoid tedious statements, we close this section by an associated theorem.
	    \begin{theorem}\label{thm:conofRKHSequ}
	    	The well-defined condition for RKHS \eqref{eq:conofRKHS} is equivalent to
	    	\begin{equation}\label{eq:conofRKHSequ}
	    		C_1 < \infty
	    	\end{equation}	
	    \end{theorem}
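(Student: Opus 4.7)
The plan is to show both directions at once by computing the integral $\int_{\mathbb{R}} K_{\mathcal{F},j}(y,y)\phi(y)\,dy$ explicitly and identifying the result (up to an additive and multiplicative constant) with $C_1$. Since $\gamma_j > 0$, the finiteness of the kernel integral is equivalent to the finiteness of $\gamma_j \int_{\mathbb{R}} \eta(y,y)\phi(y)\,dy$, so the whole task reduces to showing
\begin{equation*}
\int_{\mathbb{R}} \eta(y,y)\phi(y)\,dy = C_1.
\end{equation*}

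First I would substitute \eqref{eq:rkj} into the left-hand side of \eqref{eq:conofRKHS} and use that $\phi$ integrates to $1$ to strip off the constant term. Next, I would unfold $\eta(y,y)$ using \eqref{eq:etaexpression}: on $y>0$ it equals $\int_0^y \psi^{-2}(t)\,dt$, on $y<0$ it equals $\int_y^0 \psi^{-2}(t)\,dt$, and it vanishes at $y=0$ (a null set). This splits $\int_{\mathbb{R}} \eta(y,y)\phi(y)\,dy$ into a positive-axis piece and a negative-axis piece.

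The core step is a Fubini/Tonelli swap on each piece. Since the integrand $\psi^{-2}(t)\phi(y)\mathbf{1}_{\{0<t<y\}}$ is nonnegative, Tonelli applies without integrability assumptions; exchanging the order of integration on the positive side gives
\begin{equation*}
\int_0^\infty \phi(y)\int_0^y \frac{1}{\psi^2(t)}\,dt\,dy = \int_0^\infty \frac{1}{\psi^2(t)}\int_t^\infty \phi(y)\,dy\,dt = \int_0^\infty \frac{1-\Phi(t)}{\psi^2(t)}\,dt,
\end{equation*}
and a symmetric exchange on the negative side yields $\int_{-\infty}^0 \Phi(t)\psi^{-2}(t)\,dt$. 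Adding the two recovers exactly $C_1$ by \eqref{eq:C1expansion}, and both directions of the equivalence follow from chaining the identities with $\gamma_j > 0$.

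The argument is essentially a change of order of integration, so the only real subtlety is checking that the Tonelli interchange is legitimate — this is immediate because $\psi^{-2}$, $\phi$, and the indicators are all nonnegative and measurable, so no absolute integrability hypothesis is needed beforehand; the equivalence is then a bookkeeping step. I do not expect any obstacle beyond keeping track of the signs of the bounds in the piecewise definition of $\eta$.
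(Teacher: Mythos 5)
Your proposal is correct and follows exactly the route the paper intends: the paper's entire proof is the remark that the claim ``is plain by applying Fubini theorem,'' and your argument is precisely that Fubini/Tonelli interchange, carried out in detail (reducing to $\int_{\mathbb{R}}\eta(y,y)\phi(y)\,dy$ via $\gamma_j>0$ and recovering the two tail integrals defining $C_1$). No gaps; the Tonelli justification via nonnegativity is the right way to make the interchange rigorous.
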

	    The proof is plain by applying Fubini theorem.
	    
      \section{Main results}\label{sec:main}      
      \subsection{Estimators of ODIS and LapIS}	    
	  Again, consider the problem of estimating an integral
	  \begin{equation}
	  	\label{eq:target}
	  	C = \int_{\mathbb{R}^d} G(\bm z)p(\bm z;\bm \mu,\bm \Sigma)d\bm z.
	  \end{equation}
      Firstly we assume that $G(\bm z)>0$ for all $\bm z \in \mathbb{R}^d$. Let $g(\bm z) = \log G(\bm z)$. We rewrite~\eqref{eq:target} as
      \begin{equation}
      	\label{eq:exptype}
      	C = \int_{\mathbb{R}^d} \frac{\exp (H(\bm z))}{\sqrt{(2\pi)^d\det{(\bm \Sigma)}}}d\bm z,
      \end{equation}
      where $H(\bm z)=g(\bm z) - \frac 12 \bm z^T \bm \Sigma^{-1} \bm z$.
      
      Assume $H$ is a unimodal function. We take $\bm z_{\star} = \arg \max_{\bm z \in \mathbb{R}^d} H(\bm z)$, which solves
      \begin{equation}\label{eq:LapISdrift}
      	\nabla H(\bm z) = \nabla g(\bm z) - \bm \Sigma^{-1} \bm z =0.
      \end{equation} 
      Here we deduce the estimators of ODIS and LapIS, respectively. 
      ODIS takes importance density $p(\bm z;\bm z_{\star},\bm \Sigma)$. Therefore, the integral~\eqref{eq:target} becomes
      \begin{equation}
      	\label{eq:ODIStype}
      	\int_{\mathbb{R}^d} \frac{\exp (H(\bm z))}{\sqrt{(2\pi)^d\det{(\bm \Sigma)}}}d\bm z = (2\pi)^{-d/2} \int_{\mathbb{R}^d} \exp (H(\bm L\bm x+ \bm z_{\star})) d\bm x,
      \end{equation}
      where $\bm L \bm L^T =\bm \Sigma$.
      
      Let $\rho(\cdot)$ denote the PDF of 1-dimensional standard normal distribution. Take
      \begin{align}\label{eq:fODISform}
      	f_{O}(\bm x) &= (2\pi)^{-d/2} \exp (H(\bm L \bm x+ \bm z_{\star})) \prod_{j=1}^d \frac{1}{\rho(x_j)} \nonumber \\
      	&= \exp(g(\bm L \bm x+ \bm z_{\star})-\frac 12 (\bm L \bm x+ \bm z_{\star})^T \bm \Sigma^{-1} (\bm L\bm x+ \bm z_{\star}) +\frac 12 \bm x^T\bm x) \nonumber \\
      	&= \exp(g(\bm L \bm x+ \bm z_{\star})- \bm z_{\star}^T \bm L^{-T} \bm x - \frac 12 \bm z_{\star}^T \bm \Sigma^{-1} \bm z_{\star}) \nonumber \\
      \end{align}
      Thus we rewrite the integral as
      \begin{equation}\label{eq:ODISform}
      	C = \int_{\mathbb{R}^d} f_O(\bm x) \prod_{j=1}^d \rho(x_j) d \bm x = \int_{(0,1)^d}f_O(\bm \Phi^{-1}(\bm u)) d\bm u, 
      \end{equation}
      where $\bm \Phi^{-1}(\cdot)$ is the inverse (applied componentwise) of CDF of d-dimensional standard normal distribution. Let
      \begin{equation}\label{eq:estimatorofODIS}
      	\hat{I}_N(G_{ODIS}) = \frac 1N \sum_{i=1}^N  f_O(\bm \Phi^{-1}(\bm u_i)),
      \end{equation}
      where $\bm u_i^{'}s$ are from the unit hypercube. For MC they are i.i.d., and here for QMC associated with lattice rule, they can be constructed by rank-1 randomly shifted lattice rule (see Algorithm 6 of \cite{Nichols2014}, CBC algorithm). 
       
      Next consider LapIS. Let $\bm \Sigma_{\star} = (-\nabla^2 H(z_{\star}))^{-1}$, where
      $\nabla^2 H(\bm z)= \nabla^2 g(\bm z) - \bm \Sigma^{-1}$, or equivalently
      \begin{equation}\label{eq:LapIScovariance}
      	\bm \Sigma^{-1} = \bm \Sigma_{\star}^{-1} + \nabla^2 g(\bm z)
      \end{equation}
      Suppose $\bm L_{\star}$ is a factorization matrix of $\bm \Sigma_{\star}$ satisfies $\bm L_{\star} \bm L_{\star}^T=\bm \Sigma_{\star}$. Therefore, the integral~\eqref{eq:target}  becomes
      \begin{equation}
      	\label{eq:LapIStype}
      	\int_{\mathbb{R}^d} \frac{\exp (H(\bm z))}{\sqrt{(2\pi)^d\det{(\bm \Sigma)}}}d\bm z = (2\pi)^{-d/2} \frac{\det (\bm L_{\star})}{\sqrt{\det{(\bm \Sigma)}}} \int_{\mathbb{R}^d} \exp (H(\bm L_{\star} \bm x+ \bm z_{\star})) d\bm x.
      \end{equation}
      Let $\rho(\cdot)$ still denote the PDF of 1-dimensional standard normal distribution. Take
      \begin{align}\label{eq:fLapISform}
      	f_{L}(\bm x) &= (2\pi)^{-d/2} \frac{\det (\bm L_{\star})}{\sqrt{\det{(\bm \Sigma)}}}\exp (H(\bm L_{\star} \bm x+ \bm z_{\star})) \prod_{j=1}^d \frac{1}{\rho(x_j)} \nonumber \\
      	&= \frac{\det (\bm L_{\star})}{\sqrt{\det{(\bm \Sigma)}}} \exp(g(\bm L_{\star} \bm x+ \bm z_{\star})-\frac 12 (\bm L_{\star} \bm x+ \bm z_{\star})^T \bm \Sigma^{-1} (\bm L_{\star} \bm x+ \bm z_{\star}) +\frac 12 \bm x^T\bm x) \nonumber \\
      	&= \frac{\det (\bm L_{\star})}{\sqrt{\det{(\bm \Sigma)}}} \exp(g(\bm L_{\star} \bm x+ \bm z_{\star})-\frac 12 (\bm L_{\star} \bm x+ \bm z_{\star})^T (\bm \Sigma_{\star}^{-1}+\nabla^2 g(\bm z_{\star})) (\bm L_{\star} \bm x+ \bm z_{\star}) +\frac 12 \bm x^T\bm x) \nonumber \\
      	&= \frac{\det (\bm L_{\star})}{\sqrt{\det{(\bm \Sigma)}}} \exp(g(\bm L_{\star} \bm x+ \bm z_{\star})-\frac 12 (\bm L_{\star} \bm x+ \bm z_{\star})^T \nabla^2 g(\bm z_{\star}) (\bm L_{\star} \bm x+ \bm z_{\star}) - \bm z_{\star}^T\bm \Sigma_{\star}^{-1}\bm L_{\star}\bm x -\frac 12 \bm z_{\star}^T\bm \Sigma_{\star}^{-1} \bm z_{\star}). \nonumber \\
      \end{align}
      The third equation follows from \eqref{eq:LapIScovariance}. Finally, we rewrite the integral as
      \begin{equation}\label{eq:LapISform}
      	C = \int_{\mathbb{R}^d} f_L(\bm x) \prod_{j=1}^d \rho(x_j) d \bm x = \int_{(0,1)^d} f_L(\bm \Phi^{-1}(\bm u)) d\bm u.
      \end{equation}
      Let
      \begin{equation}\label{eq:estimatorofLapIS}
      	\hat{I}_N(G_{LapIS}) = \frac 1N \sum_{i=1}^N  f_L(\bm \Phi^{-1}(\bm u_i)),
      \end{equation}
      where $\bm u_i^{'}s$ are the same as the previous ODIS part. 
      
      \subsection{Error bound for importance sampling}
      Nichols\cite{Nichols2014} connected the Fourier analysis and shift-averaged worst-case error, claimed that
      \begin{lem}(Kuo,2010)\label{lem:Kuo2010} \quad
      	If there exist constants $C_2>0$ and $r_2>1/2$, such that for any $h \in \mathbb{Z}$ and $h \neq 0$ holds
      	\begin{equation}\label{eq:Kuoupperboundcon}
      		\hat{\theta}(h) \leq \frac{C_2}{|h|^{2r_2}},
      	\end{equation}
      	then the generating vector $\bm z$ constructed by component-by-component algorithm satisfies
      	\begin{equation}\label{eq:Kuoupperboundsol}
      		e_{N}^{sa}(\bm z)=O(N^{-r_2+\delta}), \quad \delta>0.
      	\end{equation}
      \end{lem}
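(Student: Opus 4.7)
The plan is to bound the shift-averaged worst-case error by expressing it as a Fourier series on the dual of the rank-1 lattice, applying a Jensen-type inequality with a tunable parameter $\lambda$, and then running a component-by-component (CBC) induction argument to show that the generating vector produced by CBC meets the target rate.

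First I would substitute the Fourier expansion \eqref{eq:fourierexpansion} of $\theta$ into the shift-invariant kernel \eqref{eq:sifourierform}, plug this into the expression for $|e_N^{sa}(\bm z)|^2$, and use orthogonality of complex exponentials summed over lattice points $\{i\bm z/N\}$. This character-sum computation collapses the double integral term with the constant Fourier mode, and collapses the lattice average to those indices $\bm h$ satisfying the dual-lattice condition $\bm h \cdot \bm z_u \equiv 0 \pmod N$. Thus I would arrive at the standard representation
\begin{equation}\label{eq:duallattice}
|e_N^{sa}(\bm z)|^2 = \sum_{\emptyset \neq u \subseteq 1:d} \gamma_u \sum_{\substack{\bm h \in \mathbb{Z}^{|u|}\setminus\{\bm 0\} \\ \bm h \cdot \bm z_u \equiv 0 \pmod N}} \prod_{j \in u} \hat{\theta}(h_j).
\end{equation}

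Next I would apply a Jensen-type inequality: for $\lambda \in (1/(2r_2), 1]$, the bound $(\sum a_i)^\lambda \leq \sum a_i^\lambda$ (for nonnegative $a_i$) yields $|e_N^{sa}(\bm z)|^{2\lambda}$ dominated by the same sum structure with $\hat{\theta}(h_j)$ replaced by $|\hat\theta(h_j)|^\lambda$ and $\gamma_u$ by $\gamma_u^\lambda$. The CBC induction then runs on the coordinate index: supposing the first $d-1$ components $z_1,\dots,z_{d-1}$ already satisfy an appropriate bound, I would show that the CBC-chosen $z_d \in \mathcal{Z}_N$ (minimizing the worst-case error) is no worse than the average over all choices in $\mathcal{Z}_N$. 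Exchanging the order of summation and using the elementary fact that $\tfrac{1}{\phi(N)}\sum_{z_d \in \mathcal{Z}_N} \mathbf{1}[h_d z_d + (\text{rest}) \equiv 0 \pmod N]$ is at most $1/\phi(N)$ or $1$ in standard ways, together with the hypothesis $|\hat\theta(h)|^\lambda \leq C_2^\lambda/|h|^{2\lambda r_2}$ and the convergent sum $\sum_{h\neq 0}|h|^{-2\lambda r_2} < \infty$ (guaranteed by $2\lambda r_2 > 1$), produces a bound of order $N^{-1}$ on $|e_N^{sa}(\bm z)|^{2\lambda}$, hence $e_N^{sa}(\bm z) = O(N^{-1/(2\lambda)})$. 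Choosing $\lambda$ arbitrarily close to $1/(2r_2)$ gives the exponent arbitrarily close to $r_2$, i.e.\ $O(N^{-r_2+\delta})$ for any $\delta>0$.

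The main obstacle I anticipate is the CBC induction step: one must carefully bound the average over $z_d$ of a sum indexed by dual lattice points, separating the trivial contribution (where $h_d = 0$) from the nontrivial part, and using $\phi(N) \geq N/(1+\log N)$ or a similar estimate to absorb the arithmetic factor into the $\delta$ in the exponent. Controlling how the hidden constant depends on $d$ and on the weights $\gamma_u$ so that the induction closes cleanly is the delicate piece; the exponent itself drops out transparently from Jensen once \eqref{eq:duallattice} is in hand.
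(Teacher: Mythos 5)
Your sketch is the standard dual-lattice/Jensen/CBC-induction argument, and it is correct: the exponent arithmetic works because $r_2>1/2$ guarantees a $\lambda\in(1/(2r_2),1]$ with $2\lambda r_2>1$, and the Jensen step is legitimate here since $\hat\theta(h)\geq 0$ by \eqref{eq:invfourierafter}. The paper itself offers no proof of this lemma --- it is imported verbatim from the cited literature (Nichols--Kuo) --- and your outline reproduces exactly the proof given there, so there is nothing to compare beyond noting the match.
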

      We fix CBC algorithm and POD weights (see e.g. \cite{Kuo2011,Kuo2012}), and leave the research of weight coefficients for future work. For nonzero $h$, after some algebra (which is deferred to the Appendix) we can obtain
      \begin{equation}\label{eq:invfourierafter}
      	\hat{\theta}(h) = \frac{1}{\pi^2h^2} \int_0^1 \frac{\sin^2(\pi ht)}{\psi^2(\Phi^{-1}(t))\phi(\Phi^{-1}(t))} dt.
      \end{equation}
      For classic ODIS or LapIS, we take $\phi(x)=p(x;0,1)$ and weight function $\psi(x)=\exp(-\frac{x^2}{2\alpha^2})$. Since both of them are symmetric about $x=0$, we can rewrite~\eqref{eq:invfourierafter} as
      \begin{equation}\label{eq:invfouriersym}
      	\hat{\theta}(h) = \frac{2}{\pi^2h^2} \int_0^{1/2} \frac{\sin^2(\pi ht)}{\psi^2(\Phi^{-1}(t))\phi(\Phi^{-1}(t))} dt.
      \end{equation}
      Substitute $\phi$ and $\psi$,
      \begin{equation}\label{eq:invfouriernornor}
      	\hat{\theta}(h) = \frac{2\sqrt{2\pi}}{\pi^2h^2} \int_0^{1/2} \exp\big((1+\frac{2}{\alpha^2})\frac{(\Phi^{-1}(u))^2}{2}\big) \sin^2(\pi ht) dt.
      \end{equation}
      Note the fact that for any $0<u\leq 1/2$, $\exp(\frac{[\Phi^{-1}(u)]^2}{2}) \leq \frac 1u$. Therefore, we scale the right hand side of~\eqref{eq:invfouriernornor}
      \begin{equation}\label{eq:invfouriernornormid}
      	\hat{\theta}(h) \leq \frac{2\sqrt{2\pi}}{\pi^2h^2} \int_0^{1/2} u^{-1-2/\alpha^2}  \sin^2(\pi ht) dt
      \end{equation}
      Moreover, for any positive integer $h$ and $0<c<1$, we have
      \begin{equation}\label{eq:ubofsin2}
      	\int_0^{1/2} u^{-1-c}\sin^2(\pi ht) dt \leq \frac{(2\pi h)^c}{c(2-c)}.
      \end{equation}
      Therefore, for $\alpha^2>2$ (i.e. $0<\frac{2}{\alpha^2}<1$) and any positive integer $h$:
      \begin{equation}
      	\hat{\theta}(h) \leq \frac{\sqrt{2}\pi^{\frac{2}{\alpha^2}-\frac{3}{2}}}{(\alpha^2-1)} h^{-2(1-1/\alpha^2)}.
      \end{equation}
      Finally, by symmetry of Fourier coefficient, we claim that for nonzero integer $h$, the condition ~\eqref{eq:Kuoupperboundcon} of lemma~\ref{lem:Kuo2010} holds, where
      \begin{equation}
      	C_2 = \frac{\sqrt{2}\pi^{\frac{2}{\alpha^2}-\frac{3}{2}}}{(\alpha^2-1)}, \quad r_2 = 1-1/\alpha^2, \quad \alpha^2>2.
      \end{equation}
      Thus, due to \eqref{eq:Kuoupperboundsol}, the shift-averaged worst-case error associated with normal proposal and weight function is given by
      \begin{equation}
      	e_{N}^{sa}(\bm z)=O(N^{-(1-1/\alpha^2)+\delta}).
      \end{equation}
      The hidden condition remained is that the IS integrand locates in the RKHS. We list up some conditions for the for log original integrand $g$.
      \begin{assum}[Upper bound]\label{assum:originalboundary}
      	\begin{equation*}
      		g(\bm z) = O(||\bm z||^{\beta}), \beta < 2,
      	\end{equation*}   	
      \end{assum}
      \begin{assum}[Minimax eigenvalue]\label{assum:eigenvalue}
      	\begin{equation*}
      		-\min_{i \in 1:d} h_i \max_{j \in 1:d} \lambda_j< \frac{1}{\alpha}.
      	\end{equation*}
      	where $h_i$ denote eigenvalues of $\nabla^2 g(\bm z_{\star})$, and $\lambda_k$ denote eigenvalues of $\Sigma_{\star}, k=1,2,...,d$.
      \end{assum}
            \begin{assum}[Semi-positive definite]\label{assum:convex}
      	$\nabla^2 g(\bm z_{\star})$ is semi-definite.
      \end{assum}
      Now we have prepared to state main results for ODIS and LapIS.
      \begin{theorem}\label{thm:alternative}
      	Consider the integral \eqref{eq:target}. Apply RQMC associated with rank-1 randomly shifted lattice rule constructed by CBC algorithm and equipped by weight function $\psi(\bm x) \equiv \exp(-\frac{x^2}{2\alpha^2})$, where $\alpha^2>2$ makes the corresponding reproducing kernel Hilbert space well-defined. We claim
      	\begin{equation}\label{eq:errorbound}
      		\sqrt{\mathbb{E}|(\hat{I}_N(G_{IS})-C|^2}= O(N^{-1+1/\alpha^2+\delta}), \quad \delta>0.
      	\end{equation} 
      	For ODIS, a sufficient condition for~\eqref{eq:errorbound} is Assumption~\ref{assum:originalboundary}. For LapIS, sufficient conditions for ~\eqref{eq:errorbound} are Assumptuion~\ref{assum:originalboundary}+Assumptuion~\ref{assum:eigenvalue}.
      \end{theorem}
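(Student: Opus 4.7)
The plan is to combine the RKHS worst-case error bound with the Fourier coefficient estimate just derived. A randomly shifted lattice rule yields
\begin{equation*}
\sqrt{\mathbb{E}|\hat{I}_N(f)-C|^2}\;\le\; e_N^{sa}(\bm z)\,\|f\|_{\mathcal{F}},
\end{equation*}
and the computation that led to Lemma~\ref{lem:Kuo2010} with $r_2=1-1/\alpha^2$ gives $e_N^{sa}(\bm z)=O(N^{-(1-1/\alpha^2)+\delta})$ for the CBC generating vector. The entire proof therefore reduces to checking finiteness of $\|f_O\|_{\mathcal{F}}$ (for ODIS) and $\|f_L\|_{\mathcal{F}}$ (for LapIS), which by \eqref{eq:multiinnerprod} amounts to showing that for every $u\subseteq 1{:}d$
\begin{equation*}
\int_{\mathbb{R}^{|u|}} \Big|\frac{\partial^{|u|} f}{\partial \bm x_u}(\bm x_u,\bm 0)\Big|^2 \prod_{k\in u}\psi^2(x_k)\,d\bm x_u \;<\; \infty.
\end{equation*}

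For ODIS, one reads off from \eqref{eq:fODISform} that $f_O$ is the exponential of $g(\bm L\bm x+\bm z_\star)$ plus an affine term in $\bm x$. Each mixed partial is $f_O$ itself times a polynomial in the partial derivatives of $g$ at $\bm L\bm x+\bm z_\star$; granted that these derivatives have at most polynomial growth (which follows from Assumption~\ref{assum:originalboundary} together with the standing smoothness of $g$), the dominant factor in $|\partial^{|u|}f_O|^2$ is $\exp(2g(\bm L\bm x+\bm z_\star))$. Pairing this with the Gaussian weight $\exp(-\|\bm x_u\|^2/\alpha^2)$ coming from $\prod_{k\in u}\psi^2(x_k)$, Assumption~\ref{assum:originalboundary} ($\beta<2$) makes the integrand decay faster than any Gaussian tail, so the integral is finite.

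The LapIS case is structurally parallel but more delicate, because \eqref{eq:fLapISform} contributes an additional quadratic form $-\frac{1}{2}(\bm L_\star\bm x+\bm z_\star)^T\nabla^2 g(\bm z_\star)(\bm L_\star\bm x+\bm z_\star)$ in the exponent. When $\nabla^2 g(\bm z_\star)$ has a negative eigenvalue, this term grows quadratically in $\bm x$, and keeping it dominated by the Gaussian weight $\prod_{k\in u}\psi^2(x_k)=\exp(-\|\bm x_u\|^2/\alpha^2)$ is the main obstacle of the proof. The natural route is a simultaneous-diagonalisation argument that bounds the worst-case quadratic growth rate of the exponent of $|f_L(\bm x_u,\bm 0)|^2$ by $-h_{\min}\lambda_{\max}\|\bm x_u\|^2$, where $h_{\min}=\min_i h_i$ and $\lambda_{\max}=\max_j\lambda_j$. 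Assumption~\ref{assum:eigenvalue} then provides exactly the margin needed for the net quadratic form in the exponent to remain strictly negative definite after subtracting $\|\bm x_u\|^2/\alpha^2$, whereupon the subquadratic term $g(\bm L_\star\bm x+\bm z_\star)$ is absorbed as in the ODIS case by Assumption~\ref{assum:originalboundary}.

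The routine part is the term-by-term expansion of the inner product \eqref{eq:multiinnerprod} and differentiation of the exponentials by the chain rule; the genuinely hard step is the spectral bookkeeping for LapIS, namely showing that Assumption~\ref{assum:eigenvalue} is strong enough to keep the net quadratic form strictly negative definite uniformly over all subsets $u\subseteq 1{:}d$ and over any admissible factorisation $\bm L_\star$ of $\bm\Sigma_\star$.
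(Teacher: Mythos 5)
Your proposal is correct and follows essentially the same route as the paper: the error bound is reduced to the shift-averaged worst-case error rate $O(N^{-(1-1/\alpha^2)+\delta})$ from Lemma~\ref{lem:Kuo2010} together with membership of $f_O$ (resp.\ $f_L$) in the RKHS, where for ODIS Assumption~\ref{assum:originalboundary} controls the subquadratic exponent and for LapIS the extra quadratic form $-\frac12(\bm L_\star\bm x)^T\nabla^2 g(\bm z_\star)(\bm L_\star\bm x)$ is bounded by $-\frac{h}{2}\max_j\lambda_j\|\bm x\|^2$ via a spectral decomposition, with Assumption~\ref{assum:eigenvalue} supplying exactly the margin against the Gaussian weight. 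Your version is if anything slightly more explicit than the paper's, since you note that the RKHS norm \eqref{eq:multiinnerprod} involves the mixed partial derivatives and not just $f$ itself, a point the paper's proof passes over.
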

      \begin{proof}
      As long as $\beta < 2$, Assumptuion~\ref{assum:originalboundary} makes $g(\bm L_{\star} \bm x+ \bm z_{\star})$ dominated by $\frac {(\bm x^T \bm x)}{2\alpha}$, implying $\exp(g(\bm L_{\star} \bm x+ \bm z_{\star})-\frac {(\bm x^T \bm x)}{2\alpha})$ vanishes as $\bm x$ goes to infinity. For ODIS,~\eqref{eq:fODISform} ensures that Assumption~\ref{assum:originalboundary} is sufficient to make $f_O$ locate in RKHS. However, for LapIS, there exist some terms more for~\eqref{eq:fLapISform}. More specifically, we pick up the dominating term $\exp[D(\bm x)]$, where
      \begin{equation}\label{eq:domination}
	    D(\bm x) = g(\bm L_{\star} \bm x+ \bm z_{\star}) - \frac 12 (\bm L_{\star} \bm x)^T \nabla^2 g(\bm z_{\star})(\bm L_{\star} \bm x)
      \end{equation}
      Assumptuion~\ref{assum:originalboundary} controls the first term. If Assumptuion~\ref{assum:eigenvalue} is added, let $\nabla^2 g(\bm z_{\star}) = \bm Q^T \bm T \bm Q$ denote the singular value decomposition. Then $\bm Q$ is an orthogonal matrix and $\bm T$ is a diagonal matrix with eigenvalues being diagonal entries. Furthermore,
      \begin{equation}
      	- \frac 12 (\bm L_{\star} \bm x)^T \nabla^2 g(\bm z_{\star})(\bm L_{\star} \bm x) = -\frac 12 (\bm Q \bm L_{\star} \bm x)^T \bm T (\bm Q \bm L_{\star} \bm x),
      \end{equation}
      which is dominated by 
      \begin{equation}
      	-\frac{h}{2} ||\bm Q \bm L_{\star} \bm x||^2 \leq -\frac{h}{2} ||\bm Q||^2_2||\bm L_{\star}||^2_2||x||^2 \leq -\frac{h}{2} \max_{j \in 1:d} \lambda_j ||x||^2.
      \end{equation}
      The second inequality holds since the spectral norm of orthogonal matrix is 1 and the spectral norm of $\bm L_{\star}$ is the nonnegative square root of maximal eigenvalue of $\bm L_{\star}^T \bm L_{\star}$, which is the same as $\bm L_{\star} \bm L_{\star}^T = \bm \Sigma_{\star}$.
      Note that Assumptuion~\ref{assum:eigenvalue} is equivalent to $\frac{1}{2\alpha} > -\frac{h}{2} \max_{j \in 1:d} \lambda_j$, then the integrand still belongs to RKHS. Our claim holds.           
      \end{proof}	
      If Assumption~\ref{assum:eigenvalue} is reckoned to be sophisticated, one can use Assumption~\ref{assum:convex} to replace it since the latter is sufficient for the former. 

      \begin{rem}
      	We consider $\beta$ in Assumptuion~\ref{assum:originalboundary}. If $\beta =2$, then 
      	\begin{eqnarray}
      		g(\bm L_{\star} \bm x+ \bm z_{\star}) &=& O(||\bm L_{\star} \bm x||^2) \nonumber \\
      		&\leq& O(||\bm L_{\star}||^2||\bm x||^2) \nonumber \\
      		&=& O((\max_{j \in 1:d} \lambda_j)||\bm x||^2) \nonumber \\
      	\end{eqnarray}
        Since the RKHS contains $\exp(\gamma ||\bm x||^2)$ when $\gamma < \frac{1}{2\alpha}$, we have $\max_{j \in 1:d} \lambda_j < \frac{1}{2\alpha}$. Therefore, a more accurate statement of Assumptuion~\ref{assum:originalboundary} is that
        \begin{equation}
            g(\bm z) = O(||\bm z||^{\beta}), \beta < 2
        \end{equation}
        or 
        \begin{equation}\label{assum:originalboundaryplus}
        	g(\bm z) = O(||\bm z||^2), \quad \max_{j \in 1:d} \lambda_j < \frac{1}{2\alpha}
        \end{equation}
      \end{rem}
      \begin{rem}
        Let us consider weight functions. For LapIS, in fact we do not have many choices. For example, we cannot take $\psi(\bm x) \equiv \exp(-\frac{|x|}{\alpha})$ as the weight function. Because LapIS brings $\exp(- \frac 12 (\bm L_{\star} \bm x)^T \nabla^2 g(\bm z_{\star})(\bm L_{\star} \bm x))$, which may not belong to the corresponding RKHS, leading the conclusion failed. However, it is not the case for ODIS since the optimal drift does not produce singularities. Hence, we have more choices of weight function for ODIS than LapIS. It turns out that the conditions of convergence for LapIS are more strict than ODIS.
      \end{rem}
	  
	  \subsection{Multivariate $t$ distribution as the proposal}
	  We take multivariate $t$ distribution as the proposal instead of normal distribution in this section. The multivariate $t$ distribution with center $\bm\mu\in\mathbb{R}^{d\times1}$, scale (positive definite) matrix $\bm\Sigma\in\mathbb{R}^{d\times d}$ and $\nu>0$ degrees of freedom, denoted by $t_\nu(\bm\mu,\bm\Sigma)$ has a representation
	  $$\bm t = \bm \mu + \bm L\bm x,$$
	  where  $\bm L\bm L^T = \bm \Sigma$, and $\bm x $ follows a multivariate t-distribution with independent components.
	  \begin{equation}
	  	q(\bm x;\bm 0,\bm I_d,\nu) = c_{\nu}\prod_{j=1}^d(1+\frac{x_j^2}{\nu})^{-\frac{\nu + 1}{2}}
	  \end{equation}
	  where $c_{\nu}$ denotes for normalization constant. 
	  By a change of variables, we have
	  \begin{eqnarray}\label{eq:rstrational}
	  	C &=& \int_{\mathbb{R}^d} G(\bm t)p(\bm t;\bm \mu_0,\bm \Sigma_0) d\bm t \nonumber \\
	  	&=& \int_{\mathbb{R}^d} G(\bm t)\frac{p(\bm t;\bm \mu_0,\bm \Sigma_0)}{q(\bm t;\bm \mu,\bm \Sigma, \nu)}q(\bm{\mu+Lx}; \bm \mu,\bm \Sigma,\nu) d\bm t \nonumber \\
	  	&=& \int_{\mathbb{R}^d} G(\bm{\mu+Lx})\frac{p(\bm{\mu+Lx};\bm \mu_0,\bm \Sigma_0)}{q(\bm{\mu+Lx}; \bm \mu,\bm \Sigma,\nu)}q(\bm x;\bm 0,\bm I_d,\nu) d\bm x \nonumber \\
	  	&=& \int_{\mathbb{R}^d} G(\bm{\mu+Lx})W(\bm x)q(\bm x;\bm 0,\bm I_d,\nu) d\bm x,
	  \end{eqnarray}
	  where $\bm L^T \bm L=\bm \Sigma$, the likelihood ratio function is given by
	  \begin{eqnarray}\label{eq:LRoftrational}
	  	W(\bm x) &=& \frac{p(\bm{\mu+Lx};\bm \mu_0,\bm \Sigma_0)}{q(\bm{\mu+Lx}; \bm \mu,\bm \Sigma,\nu)} \nonumber \\
	  	&=& \frac{(\frac{\nu}{2})^{d/2}\Gamma(\frac{\nu}{2})}{\Gamma(\frac{\nu+d}{2})(\det{\Sigma_0})^{1/2}}\prod_{j=1}^d(1+\frac{x_j^2}{\nu})^{-\frac{\nu + 1}{2}}\exp\{-(\bm \mu+\bm L \bm x -\bm \mu_0)^T\bm \Sigma_0^{-1}(\bm \mu+\bm L \bm x -\bm \mu_0)/2\} \nonumber \\
	  	&\propto& \prod_{j=1}^d(1+\frac{x_j^2}{\nu})^{-\frac{\nu + 1}{2}}\exp\{-\frac 12 \bm x^T \bm L^T \bm \Sigma_0^{-1} \bm L \bm x-\bm L^T \bm \Sigma_0^{-1}(\bm \mu -\bm \mu_0) \bm x\}, \nonumber
	  \end{eqnarray}
	  $\Gamma(\cdot)$ is the well-known Gamma function. To generate $t$-variables, we set
	  \begin{equation}\label{eq:repofz}
	  	\bm z= T^{-1}(u).
	  \end{equation}
        where $T$ denotes for the CDF of multivariate t-distribution with independent components. 
	  One can simulate variables from $(0,1)^{d}$ via ~\eqref{eq:repofz}. Hence the estimator for multivariate $t$ distribution IS is
	  \begin{equation}\label{eq:rstproposal}
	  	\hat{I}_N(G_t)=\frac 1N \sum_{i=1}^N f_t(T^{-1}(u_i)),
	  \end{equation}
	  where $f_t(\bm x)=G(\bm \mu +\bm L \bm x)W(\bm x)$, $\bm u_1,...,\bm u_N$ are random points for MC or randomly shifted lattice rule for RQMC on $(0,1)^{d}$.
	  When we analyze the error bound for randomly shifted lattice rule, the choice of weight function is crucial. Here $t$ distribution is taken as the proposal function, we cannnot still take the normal density as the weight function since the corresponding RKHS does not contain the density of $t$ distribution. An alternative selection is the rational function
	  \begin{equation}
	  	\rho_{\lambda}(x)=\frac{\lambda-1}{2}\frac{1}{(1+|x|)^{\lambda}}, \quad \lambda>1.
	  \end{equation} 
	  The tail is heavier such that $t$ distribution with degrees of freedom $\nu>2\lambda+1$ locates in the RKHS. We have the following result.
	  \begin{theorem}\label{thm:tproposal}
	  	Consider the integral \eqref{eq:target}. Apply RQMC associated with rank-1 randomly shifted lattice rule constructed by CBC algorithm and equipped by weight function $\psi(\bm x)=\prod_{i=1}^{d} \rho_{\lambda}(x_i)$ , where $\nu>2\lambda+1$ makes the corresponding reproducing kernel Hilbert space well-defined. Then a sufficient condition for
	  	\begin{equation}\label{eq:errorboundoft}
	  		\sqrt{\mathbb{E}|(\hat{I}_N(G_t)-C|^2}= O(N^{-1+\frac{2\lambda+1}{2\nu}+\delta}), \quad \delta>0.
	  	\end{equation} 
	  	is Assumption~\ref{assum:originalboundary}.
	  \end{theorem}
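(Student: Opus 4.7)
The plan is to mirror the proof of Theorem~\ref{thm:alternative}: bound the Fourier coefficient $\hat\theta(h)$ for the new pair $(\phi,\psi)$ where $\phi$ is now the univariate $t_\nu$ density and $\psi=\rho_\lambda$, then invoke Lemma~\ref{lem:Kuo2010} to obtain the shift-averaged worst-case error rate, and finally verify that the IS integrand $f_t\circ\bm\Phi^{-1}$ sits in the associated RKHS so that this rate transfers to the root mean square error. What changes relative to Theorem~\ref{thm:alternative} is that both $\phi$ and $\psi$ now have polynomial tails instead of Gaussian ones; this makes $\psi^2(\Phi^{-1}(t))\phi(\Phi^{-1}(t))$ in \eqref{eq:invfourierafter} vanish faster near the endpoints, and the hypothesis $\nu>2\lambda+1$ is exactly what is needed to keep the resulting integral bounded---equivalently, to make $C_1<\infty$ via Theorem~\ref{thm:conofRKHSequ}.

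The main computation is the tail asymptotics as $t\to 0^+$. From the $t_\nu$ tail $1-\Phi(x)\sim Cx^{-\nu}$ one reads off $|\Phi^{-1}(t)|\asymp t^{-1/\nu}$, so $\phi(\Phi^{-1}(t))\asymp t^{1+1/\nu}$ and $\psi^2(\Phi^{-1}(t))\asymp t^{2\lambda/\nu}$. By symmetry of $\phi$ and $\psi$ about the origin, the same asymptotics hold near $t=1$ with $t$ replaced by $1-t$, giving
\begin{equation*}
\frac{1}{\psi^2(\Phi^{-1}(t))\phi(\Phi^{-1}(t))}\;\leq\;C\bigl(t^{-1-(2\lambda+1)/\nu}+(1-t)^{-1-(2\lambda+1)/\nu}\bigr).
\end{equation*}
Plugging this into \eqref{eq:invfourierafter} and applying the elementary bound \eqref{eq:ubofsin2} with $c=(2\lambda+1)/\nu\in(0,1)$---this is precisely the range permitted by $\nu>2\lambda+1$---yields
\begin{equation*}
\hat\theta(h)\;\leq\;\frac{C}{h^2}\,h^{(2\lambda+1)/\nu}\;=\;C\,h^{-2r_2},\qquad r_2=1-\frac{2\lambda+1}{2\nu}>\tfrac 12.
\end{equation*}
Lemma~\ref{lem:Kuo2010} then delivers $e_N^{sa}(\bm z)=O(N^{-r_2+\delta})=O(N^{-1+(2\lambda+1)/(2\nu)+\delta})$ for the shift-averaged worst-case error.

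The remaining and most delicate step is to check that $f_t\circ\bm\Phi^{-1}$ belongs to the RKHS under Assumption~\ref{assum:originalboundary}. Writing $f_t(\bm x)=G(\bm\mu+\bm L\bm x)W(\bm x)$ and expanding $W$, the integrand carries a polynomial factor $\prod_j(1+x_j^2/\nu)^{(\nu+1)/2}$ coming from the reciprocal of the proposal density, a Gaussian factor $\exp\{-\tfrac 12\bm x^T\bm L^T\bm\Sigma_0^{-1}\bm L\bm x+\text{linear}\}$ coming from $p(\cdot;\bm\mu_0,\bm\Sigma_0)$, and a factor $\exp(g(\bm\mu+\bm L\bm x))$ with $g=O(\|\cdot\|^\beta)$, $\beta<2$. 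Because $\beta<2$, the negative-definite quadratic exponent dominates the $\|\bm x\|^\beta$ growth uniformly, so $f_t$ and every fixed-order mixed partial derivative decay super-polynomially in $\bm x$. Since the RKHS norm \eqref{eq:multiinnerprod} integrates these mixed partials against $\prod_{k\in u}\psi^2(x_k)$ and $\psi$ decays only polynomially, every term in the sum over $u\subset 1:d$ is finite, which gives $\|f_t\circ\bm\Phi^{-1}\|_{\mathcal G}<\infty$. Combining this with the worst-case error bound above yields \eqref{eq:errorboundoft}.
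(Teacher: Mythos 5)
Your proof is correct and follows the same overall strategy as the paper's: bound $\hat\theta(h)$ for the pair ($t_\nu$ density, rational weight), invoke Lemma~\ref{lem:Kuo2010} to get $r_2=1-\frac{2\lambda+1}{2\nu}$, and check RKHS membership of $f_t$ under Assumption~\ref{assum:originalboundary}. The one place where you diverge is the key technical step of controlling $\psi^2(\Phi^{-1}(t))\phi(\Phi^{-1}(t))$ near the endpoints: you read off $|\Phi^{-1}(t)|\asymp t^{-1/\nu}$ directly from the tail asymptotics $1-\Phi(x)\sim Cx^{-\nu}$ of the $t_\nu$ distribution, whereas the paper, to circumvent the lack of a closed-form $t$ CDF, sandwiches the $t_\nu$ density between constant multiples of the surrogate rational density $\propto(1+|x|)^{-(\nu+1)}$, whose CDF $P_\nu$ and quantile are explicit, and then transfers two-sided bounds $P_{\nu}^{-1}(t/U_{\nu})\le\Phi^{-1}(t)\le P_{\nu}^{-1}(\min(t/L_{\nu},1))$ through the monotonicity of $\psi$. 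Both routes land on the same exponent $t^{-1-(2\lambda+1)/\nu}$; the paper's comparison-density argument yields explicit constants $L_\nu,U_\nu$ and is fully non-asymptotic, while yours is shorter but should be accompanied by the (easy) remark that the asymptotic bound extends to a uniform one on $(0,1/2]$ by continuity and positivity away from the endpoints. Your observations that $\nu>2\lambda+1$ is precisely the condition $C_1<\infty$ of Theorem~\ref{thm:conofRKHSequ} and that $c=(2\lambda+1)/\nu\in(0,1)$ is the admissible range in \eqref{eq:ubofsin2} are correct and in fact make explicit what the paper leaves implicit; likewise your expansion of $W$ (with the positive power $(\nu+1)/2$ on the polynomial factor and the negative-definite quadratic from $p(\cdot;\bm\mu_0,\bm\Sigma_0)$ doing the domination) is a more careful version of the paper's one-line closing claim.
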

	  \begin{proof}
	    We start from the Fourier coefficient~\eqref{eq:invfouriersym}. The difficulty is that there is no analytic expression of CDF of $t$ distribution, therefore we try to give an upper bound of the integrand. Note that
	    \begin{equation}
	    	\min(1,\nu) \leq \frac{(1+|x|)^2}{1+x^2/\nu} \leq 1+\nu,
	    \end{equation}
	    Denote the normalizing constant $c_{\nu}=\frac{\Gamma(\frac{\nu+1}{2})}{\sqrt{\nu\pi}\Gamma(\frac{\nu}{2})}$, we thus have
	    \begin{equation}\label{eq:studentsppdf}
	    	L_{\nu} \leq \frac{\phi(x)}{\rho_{\nu}(x)} \leq U_{\nu},
	    \end{equation}
	    where the lower constant
	    \begin{equation}
	    	L_{\nu}=\frac{2c_{\nu}}{\nu}\big(\min(1,\nu)\big)^{\frac{\nu+1}{2}},
	    \end{equation}
	    and the upper constant
	    \begin{equation}
	    	U_{\nu}=\frac{2c_{\nu}}{\nu}(1+\nu)^{\frac{\nu+1}{2}}
	    \end{equation}
	    are irrelevant to $x$, and obviously $L_{\nu}<1<U_{\nu}$. Take the upper limit integral with respect to~\eqref{eq:studentsppdf}
	    \begin{equation}\label{eq:studentspcdf}
	    	L_{\nu} \leq \frac{\Phi(x)}{P_{\nu}(x)} \leq U_{\nu},
	    \end{equation}
	    where
	    \begin{equation}
	    	P_{\nu}(x) = \int_{-\infty}^{x} \rho_{\nu}(t) dt =
	    	\begin{cases}
	    		\frac{1}{2(1-x)^2}, & x<0 \\
	    		1-\frac{1}{2(1+x)^2}, & x\geq 0\\
	    	\end{cases}
	    \end{equation}
	    Let $x=\Phi^{-1}(u),\quad 0<u<1$,
	    \begin{equation}
	    	\frac{u}{U_{\nu}} \leq P_{\nu}(\Phi^{-1}(u)) \leq \frac{u}{L_{\nu}}
	    \end{equation}
	    Take $P_{\nu}^{-1}$ on both sides and note that the right hand side may be larger than 1, we have
	    \begin{equation}\label{eq:studentspinverse}
	    	P_{\nu}^{-1}(\frac{u}{U_{\nu}}) \leq \Phi^{-1}(u) \leq P_{\nu}^{-1}(\min(\frac{u}{L_{\nu}},1))
	    \end{equation}
	    Hence, for $0<t\leq 1/2$:
	    \begin{eqnarray}\label{eq:hatthetaofstudentrational}
	    	\psi^2(\Phi^{-1}(t))\phi(\Phi^{-1}(t))
	    	&\geq&
	    	\psi^2(P_{\nu}^{-1}(\frac{t}{U_{\nu}}))L_{\nu}\rho_{\nu}(\Phi^{-1}(t)) \nonumber \\
	    	&\geq& \psi^2(P_{\nu}^{-1}(\frac{t}{U_{\nu}}))L_{\nu}\rho_{\nu}(P_{\nu}^{-1}(\frac{t}{U_{\nu}})) \nonumber \\
	    	&=&
	    	\Big(\frac{2t}{U_{\nu}}\Big)^{2\lambda/\nu}L_{\nu}\frac{\nu}{2}\Big(\frac{2t}{U_{\nu}}\Big)^{1+1/\nu} \nonumber \\
	    	&=& \frac{\nu L_{\nu}}{2} \Big(\frac{2t}{U_{\nu}}\Big)^{1+(2\lambda+1)/\nu} \nonumber \\
	    \end{eqnarray}
	    where the inequality is due to the monotonicity of $\psi$.Combine ~\eqref{eq:invfouriersym} and ~\eqref{eq:hatthetaofstudentrational}, we finally derive
	    \begin{equation}
	    	\hat{\theta}(h) \leq \frac{C_t}{h^2} \int_0^{1/2} t^{-1-\frac{2\lambda+1}{\nu}} \sin^2(\pi ht) dt. 
	    \end{equation}
	    According to~\eqref{eq:ubofsin2}, since we have demonstrated that for any nonzero integer $h$, $\hat{\theta}(h) \leq C(\lambda,\nu)h^{-2r_{\star}}$, where the constant $C(\lambda,\nu)$ only depends on $\lambda$ and $\nu$, $r_{\star}=1-\frac{2\lambda+1}{2\nu}$. By lemma \ref{lem:Kuo2010}, the error bound holds.
	    We conclude our proof by claiming that $f_t$ locate in the RKHS. This fact is trivial since the likelihood ratio contains a negative quadratic form in the exponent term, whereas the ~\eqref{assum:originalboundary} ensures the boundedness of $f_t$.	    
	  \end{proof}
	  Since $\nu>2\lambda+1$, the convergence rate is no worse than MC. Furthermore, we find out that as the degree of freedom goes to infinity, the error bound gets close to $O(N^{-1+\delta})$, which corresponds with the normal importance density, i.e. the limit case of $t$ distribution.
	  
	  \section{Examples}\label{sec:exam}
	  \subsection{Generalized linear mixed model}
	  Consider a class of highly structured models in statistics, which is known as generalized linear mixed model (GLMM, see \cite{Kuo2008a}). To make it short, we express the model as
	  \begin{equation}\label{eq:GLMM}
	  	L(\beta,\kappa,\sigma) = \int_{\mathbb{R}^d} \prod_{j=1}^d \frac{\exp(y_j(\omega_j+\beta)-\exp(\omega_j+\beta))}{y_j}\frac{\exp(-\frac{1}{2} \bm \omega^T\bm \Sigma^{-1}\bm \omega^T)}{\sqrt{(2\pi)^d\det(\bm \Sigma)}} d\bm \omega,
	  \end{equation}
      where $\bm y = (y_1,y_2,...,y_d)$ denotes the data of nonnegative integers, $\Sigma$ denotes the covariance matrix, whose entries $\Sigma_{i,j}=\frac{\sigma^2 \kappa^{|i-j|}}{1-\kappa^2}$ for $i,j = 1,2,...,d.$ Our goal is to maximize the log-likelihood $\log L(\beta,\kappa,\sigma)$ given $\bm y$ with respect to $(\beta,\kappa,\sigma)$ under restrictions $\kappa \in (0,1), \sigma>0$.
      Note the integral ~\eqref{eq:GLMM} can be rewritten by $\int_{\mathbb{R}^d} \exp(F(\bm \omega)) d\bm \omega$ regardless of a normalizing constant, where
      \begin{equation}\label{eq:unimodalF}
      	F(\bm \omega) = \sum_{j=1}^d (y_j(\omega_j+\beta)-\exp(\omega_j+\beta)) - -\frac{1}{2} \bm \omega^T\bm \Sigma^{-1}\bm \omega^T.
      \end{equation}
  
      Obviously $F$ is a unimodal function, thus we take
      \begin{equation}
      	\bm \omega_{\star} = \arg \max_{\bm \omega \in \mathbb{R}^d} F(\bm \omega),
      \end{equation}
      which solves
      \begin{equation}\label{eq:solofdrift}
      	\nabla F(\bm \omega) = \bm y - e^{\beta}\exp(\bm \omega) - \bm \Sigma^{-1} \bm \omega = \bm 0.
      \end{equation}
  
      The Hessian is
      \begin{equation}\label{eq:Hessian}
      	\nabla^2 F(\bm \omega^{\star}) = -e^{\beta}\diag{e^{\omega_{\star 1}},e^{\omega_{\star2}},...,e^{\omega_{\star d}}}-\bm \Sigma^{-1}
      \end{equation}
      Recall the denotation in LapIS, let 
      \begin{equation}\label{eq:solofcovariance}
      	\bm \Sigma_{\star}= (-\nabla^2 F(\bm \omega_{\star}))^{-1},
      \end{equation}
      and take the decompostion of $\bm \Sigma_{\star}$, that is $\bm \Sigma_{\star}=\bm L_{\star} \bm L_{\star}^T$, then the integral $\int_{\mathbb{R}^d} \exp F(\bm \omega) d\bm \omega$ becomes
      \begin{eqnarray}\label{eq:LapISofGLMM}
      	\int_{\mathbb{R}^d} \exp(F(\bm \omega)) d\bm \omega &=& \det(\bm L_{\star}) \int_{\mathbb{R}^d} \exp(F(\bm L_{\star}\bm v)+\bm \omega_{\star}) d\bm v \nonumber \\
      	&=& \det(\bm L_{\star}) \int_{(0,1)^d} \exp(F(\bm L_{\star}\Phi^{-1}(\bm u)+\bm w_{\star}))\prod_{j=1}^d \frac{1}{\rho(\Phi^{-1}(u_j))} d\bm u
      \end{eqnarray}
      where $\rho$ and $\Phi$ denote the PDF and CDF of standard normal distribution, respectively.
      
      According to \cite{Kuo2010} and previous analysis, we take the weight function $\psi(\bm x)= \exp (-\frac{\bm x^2}{2\alpha^2})$ to establish the RKHS with $\alpha^2 > 2$.
      \begin{thm}
      	 There exists a randomly shifted lattice rule which attains $O(N^{-1+1/\alpha^2+\delta})$ error bound for this generalized linear mixed model if the integrand belongs to the RKHS, which is equivalent to
        \begin{equation}\label{eq:sufofGLMM}
      	  \max_{j \in 1:d} \lambda_j< \frac{\exp(-\beta)}{\alpha^2 \exp(\max_{j \in 1:d} \omega_{\star j} )}
        \end{equation}
      \end{thm}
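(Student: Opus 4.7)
The plan is to specialize Theorem~\ref{thm:alternative} to the GLMM of~\eqref{eq:GLMM} by computing both sufficient conditions explicitly. First, identify
\[
g(\bm \omega)=\sum_{j=1}^d\bigl(y_j(\omega_j+\beta)-\exp(\omega_j+\beta)\bigr)-\sum_{j=1}^d\log y_j
\]
as the log of the non-Gaussian part of the integrand, so that~\eqref{eq:GLMM} fits the template~\eqref{eq:target} with $\bm\mu=\bm 0$ and covariance $\bm\Sigma$. Because each $-\exp(\omega_j+\beta)\le 0$ and the remaining terms are linear in $\bm \omega$, the upper bound $g(\bm \omega)=O(\|\bm \omega\|)$ holds globally, so Assumption~\ref{assum:originalboundary} is satisfied with growth exponent $1$.

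Next I would read off the Hessian of $g$ at the mode from~\eqref{eq:Hessian} (subtracting the $\bm\Sigma^{-1}$ piece that belongs to the reference density),
\[
\nabla^2 g(\bm \omega_\star)=-e^{\beta}\diag{e^{\omega_{\star 1}},\ldots,e^{\omega_{\star d}}},
\]
whose eigenvalues are $h_i=-e^{\beta+\omega_{\star i}}$, giving $-\min_i h_i=e^{\beta}\exp(\max_j\omega_{\star j})$. Plugging this into the eigenvalue condition of Assumption~\ref{assum:eigenvalue} yields
\[
e^{\beta}\exp(\max_j\omega_{\star j})\,\max_j\lambda_j<\frac{1}{\alpha^2},
\]
which rearranges exactly to~\eqref{eq:sufofGLMM}. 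With Assumptions~\ref{assum:originalboundary} and~\ref{assum:eigenvalue} both verified, Theorem~\ref{thm:alternative} immediately delivers the $O(N^{-1+1/\alpha^2+\delta})$ error bound.

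For the ``equivalent to'' direction, I would analyze the dominating factor $\exp(D(\bm x))$ from~\eqref{eq:domination} directly: with the diagonal Hessian above, $D(\bm x)$ carries the sharp Gaussian exponent $\tfrac12 e^{\beta}\,\bm x^T\bm L_\star^T\diag{e^{\omega_{\star 1}},\ldots,e^{\omega_{\star d}}}\bm L_\star\bm x$ plus the sub-Gaussian piece $g(\bm L_\star\bm x+\bm \omega_\star)$ controlled by Assumption~\ref{assum:originalboundary}. Square-integrability against $\psi^2(\bm x)=\exp(-\|\bm x\|^2/\alpha^2)$ requires the largest eigenvalue of $\tfrac12 e^{\beta}\bm L_\star^T\diag{e^{\omega_{\star 1}},\ldots,e^{\omega_{\star d}}}\bm L_\star$ to lie strictly below $1/(2\alpha^2)$, and the submultiplicative estimate
\[
\lambda_{\max}\bigl(\bm L_\star^T\diag{e^{\omega_{\star 1}},\ldots,e^{\omega_{\star d}}}\bm L_\star\bigr)\le\|\bm L_\star\|_2^2\,\max_j e^{\omega_{\star j}}=\max_j\lambda_j\cdot\exp(\max_j\omega_{\star j})
\]
reproduces~\eqref{eq:sufofGLMM}.

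The main obstacle is making the equivalence honest: the submultiplicative estimate above is sharp only when the top eigenvector of $\bm \Sigma_\star$ aligns with that of the diagonal matrix, which is automatic when $\bm \Sigma$ itself is diagonal but not in general. Strictly,~\eqref{eq:sufofGLMM} is a sufficient condition produced by the generic spectral bound; if strict equivalence is desired, I would replace the two separate maxima in~\eqref{eq:sufofGLMM} by the single spectral quantity $\lambda_{\max}(\bm L_\star^T\diag{e^{\omega_{\star 1}},\ldots,e^{\omega_{\star d}}}\bm L_\star)$, for which equivalence with RKHS membership does hold, and state~\eqref{eq:sufofGLMM} as the clean sufficient form.
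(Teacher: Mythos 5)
Your proposal is correct and follows essentially the same route as the paper: identify $g(\bm\omega)=\sum_j[y_j(\omega_j+\beta)-\exp(\omega_j+\beta)-\log y_j]$, verify Assumption~3.1 via its upper-boundedness, compute the diagonal Hessian $\nabla^2 g(\bm\omega_\star)=-e^{\beta}\,\mathrm{diag}(e^{\omega_{\star 1}},\ldots,e^{\omega_{\star d}})$, and feed its extreme eigenvalue into Assumption~3.2 to invoke Theorem~3.4. Your closing caveat is also well taken: the paper's proof simply asserts ``clearly it is equivalent,'' but since Assumption~3.2 is itself derived from the submultiplicative bound $\|\bm Q\|_2^2\|\bm L_\star\|_2^2$, condition \eqref{eq:sufofGLMM} is honestly only a sufficient condition for RKHS membership unless the eigenvectors align as you describe, so your reformulation in terms of $\lambda_{\max}(\bm L_\star^T\mathrm{diag}(e^{\omega_{\star j}})\bm L_\star)$ is the sharper statement.
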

      \begin{proof}
        Let 
        \begin{eqnarray}
          g(\bm\omega) &=& \log\left(\prod_{j=1}^d \frac{\exp(y_j(\omega_j+\beta)-\exp(\omega_j+\beta))}{y_j}\right) \nonumber \\
          &=& \sum_{j=1}^d \left[y_j(\omega_j+\beta)-\exp(\omega_j+\beta)-\log y_j\right]. 	
        \end{eqnarray}
        Obviously $g$ satisfies Assumptuion~\ref{assum:originalboundary} since it is upper bounded. Take the derivatives with respect to the components, we have
        \begin{equation}
          \frac{\partial g(\bm\omega)}{\partial \omega_k}= y_k-	\exp(\omega_k+\beta),
        \end{equation}
        \begin{equation}
        	\frac{\partial^2 g(\bm\omega)}{\partial \omega_j\omega_k}=-\exp(\omega_k+\beta)\delta_{jk}.
        \end{equation}
        Note that $\nabla^2 g$ is a negative definite diagonal matrix, by Theorem~\ref{thm:alternative}, it is sufficient to verify whether~\ref{assum:eigenvalue} is valid. Clearly it is equivalent to \eqref{eq:sufofGLMM}.
      \end{proof}
      \begin{rem}
        Since $\alpha^2>2$ for RKHS well-defined, we transpose some terms in \eqref{eq:sufofGLMM} and immediately obtain a necessary condition:
        \begin{equation}
          2 < \alpha^2 < \frac{\exp(-\beta)}{\max_{j \in 1:d} \lambda_j \exp(\max_{j \in 1:d} \omega_{\star j})} 	
        \end{equation}
         or equivalently
         \begin{equation}\label{eq:necofGLMM}
         	2e^\beta\max_{j \in 1:d} \lambda_j \exp(\max_{j \in 1:d} \omega_{\star j}) <1.
         \end{equation}
         This is not always valid, especially when $d$ is large. The reason is that LapIS may make the boundary unbounded, even diverge faster than $\exp(\bm x^T\bm x/2\alpha^2)$, which is unfavorable in QMC computation. 
      \end{rem}

      \subsection{Randleman-Bartter model}
      Consider the problem of valuing a zero coupon bond, where the interest rates are assumed to follow the Randleman-Bartter model(see \cite{Hull2005}). A 3-dimensional Gaussian integral is analyzed by Catflisch\cite{Caflisch1998}. Here we reformulate the model in general. Our goal is to value a fair price of a $(d+1)-$year zero coupon boud with a face value of \$1, which can be expressed as a integral
      \begin{equation}\label{eq:RBmodel}
      	P = \int_{\mathbb{R}^d} \prod_{k=0}^d \frac{p(\bm z;\bm 0,\bm I_d)}{1+r_k}d\bm z.
      \end{equation}
      The interest rates can be represented by
      \begin{equation}
      	r_k = r_0\exp(-k\sigma^2/2+\sigma B_k),\quad k=1,2,...,d,
      \end{equation}
      where the Brownian vector $(B_1,B_2,...,B_d)^T$ is zero-mean and the covariance matrix $\bm C$ with entries $C_{i,j}=\min(i,j)$. $r_0$ is the current annually interest rate, $\sigma$ is the volatility.
      
      The first step is to generalize the Brownian motion. This is equivalent to find a matrix $\bm A$ such that $\bm A \bm A^T= \bm C$. Here we take the standard construction to make an analysis.
      In fact, there exist several constructions which are widely used in practice such as Brownian bridge and principal component analysis (see \cite{Hull2005}). We leave other generation methods for future work.
      
      Standard construction gives
      \begin{equation}\label{eq:STD}
      	\bm A := (a_{i,j})_{d\times d} = 
      	\begin{pmatrix}
      		1 & 0 & 0 & \cdots & 0 \\
      		1 & 1 & 0 & \cdots & 0 \\
      		1 & 1 & 1 & \cdots & 0 \\
      		\vdots & \vdots & \vdots & \ddots & \vdots \\
      		1 & 1 & 1 & \cdots & 1
      	\end{pmatrix}
      \end{equation}
      and we take 
      \begin{equation}\label{eq:BMgeneration}
      	(B_1,B_2,...,B_d)^T = \bm A (z_1,z_2,...,z_d)^T,\quad (z_1,z_2,...,z_d)^T \sim N(\bm 0,\bm I_d).
      \end{equation}
      Next we rewrite~\eqref{eq:RBmodel} as standard form~\eqref{eq:target}, that is 
      \begin{equation}
      	G(\bm z) = \prod_{k=0}^d (1+r_k)^{-1}, \quad \bm \mu = \bm 0, \quad \bm \Sigma = \bm I_d.
      \end{equation}
      Note that $a_{i,j}=1$ if and only if $i \geq j$, otherwise $a_{i,j}=0$. Therefore,
      \begin{equation}
      	\frac{\partial r_k}{\partial z_i} = \sigma r_k \bm 1\{k \geq i\}.
      \end{equation}
      \begin{equation}
      	\frac{\partial^2 r_k}{\partial z_iz_j} = \sigma^2 r_k \bm 1\{k \geq \max(i,j)\}.
      \end{equation}
      Let $g(\bm z) = \log G(\bm z) = -\sum_{k=0}^d \log(1+r_k)$, $H(\bm z)=g(\bm z)-\frac 12 \bm z^T \bm z$. Then $\nabla^2 H = \nabla^2 g - \bm I_d$. Let $\bm z_{\star}$ solves $\nabla H(\bm z) = \bm 0$, $\bm \Sigma_{\star}=(-\nabla^2 H(\bm z_{\star}))^{-1}$. We have
      \begin{equation}
      	\frac{\partial g}{\partial z_i} = -\sum_{k=1}^d \frac{1}{1+r_k}\frac{\partial r_k}{\partial z_i} = -\sum_{k=i}^d \frac{\sigma r_k}{1+r_k}.
      \end{equation}
      \begin{eqnarray}
      	\frac{\partial^2 g}{\partial z_iz_j} &=& -\sum_{k=i}^d \frac{\sigma[\frac{\partial r_k}{\partial z_j}(1+r_k)-\frac{\partial r_k}{\partial z_j}r_k]}{(1+r_k)^2} \nonumber \\
      	&=& -\sum_{k=i}^d \frac{\sigma \frac{\partial r_k}{\partial z_j}}{(1+r_k)^2} \nonumber \\
      	&=& -\sum_{k=i}^d \frac{\sigma^2 r_k \bm 1\{k \geq j\} }{(1+r_k)^2} \nonumber \\
      	&=& -\sum_{k=\max(i,j)}^d \frac{\sigma^2 r_k}{(1+r_k)^2}.
      \end{eqnarray}
      Denote $R_k = \frac{\sigma^2 r_k}{(1+r_k)^2}$ for simplicity. We obtain
      \begin{equation}\label{eq:nabla2g}
        \nabla^2 g = 
        \begin{pmatrix}
        	-\sum_{k=1}^d R_k & -\sum_{k=2}^d R_k & -\sum_{k=3}^d R_k & \cdots & -R_d \\
        	-\sum_{k=2}^d R_k & -\sum_{k=2}^d R_k & -\sum_{k=3}^d R_k & \cdots & -R_d \\
        	-\sum_{k=3}^d R_k & -\sum_{k=3}^d R_k & -\sum_{k=3}^d R_k & \cdots & -R_d \\
        	\vdots & \vdots & \vdots & \ddots & \vdots \\
        	-R_d & -R_d & -R_d & \cdots & -R_d         	
        \end{pmatrix}
      \end{equation}
      This matrix is negative definite, then $-\nabla^2 g = \bm H \bm H^T$, where one choice of decomposition matrix $\bm H$ is upper triangular
      \begin{equation}
      	\bm H = 
      	\begin{pmatrix}
            \sqrt{R_1} & \sqrt{R_2} & \sqrt{R_3} & \cdots & \sqrt{R_d} \\
            0 & \sqrt{R_2} & \sqrt{R_3} & \cdots & \sqrt{R_d} \\
            0 & 0 & \sqrt{R_3} & \cdots & \sqrt{R_d} \\
            \vdots & \vdots & \vdots & \ddots & \vdots \\
            0 & 0 & 0 & \cdots & \sqrt{R_d}
        \end{pmatrix}
      \end{equation}
      Denote $r_{\star}=\min_{k \in 1:d} r_k(\bm z_{\star})$. We still take the weight function $\psi(\bm x)= \exp (-\frac{\bm x^2}{2\alpha^2})$ to establish the RKHS with $\alpha^2 > 2$.
      \begin{thm}\label{thm:RBmodel}
        The CBC algorithm attains $O(N^{-1+1/\alpha^2+\delta})$ error bound for $\alpha^2 < \frac{1}{\kappa}\sqrt{\frac{6}{d^3+2d^2+2d+1}}$ in the Randleman-Bartter model, where $\kappa = \frac{\sigma^2}{2+r_{\star}+1/r_{\star}}$.
        \begin{proof}
          Clearly Assumption~\ref{assum:originalboundary} is satified. Since $\nabla^2 g$ is negative definite, it suffices to check Assumption~\ref{assum:eigenvalue} is valid.      
          
          According to previous theorem, we need to compute the maximal eigenvalue of $\bm \Sigma_{\star}$, and the minimal eigenvalue of $\nabla^2 g(\bm z_{\star})$
          which is unlikely to work out in theoretical analysis. Instead, estimating the eigenvalue by inequalities is performed in the following parts.  
                  
          Note that $-\nabla^2 g$ is a normal matrix. Hence we use a classic conclusion in linear algebra:
          \begin{equation}\label{eq:normaleq}
          	\sum_{k=1}^d \nu_k^2 = ||-\nabla^2 g||_F^2,
          \end{equation}
          where $\nu_k'$s denote all eigenvalues of $-\nabla^2 g$, and $||\cdot||_F$ denote the Frobenius norm, which is equivalent to the nonnegative square root of square sum of all entries. Since $R_k = \frac{\sigma^2 r_k}{(1+r_k)^2} \geq \kappa$, we estimate the Frobenius norm as follows:
          \begin{eqnarray}\label{eq:estofFnorm}
          	||-\nabla^2 g||_F^2 &=& \sum_{j=1}^d (2j-1)(\sum_{k=j}^d R_k)^2 \nonumber \\
         	&\geq& \sum_{j=1}^d (2j-1)(d-j+1)^2\kappa^2 \nonumber \\
          	&=& d\frac{(d^3+2d^2+2d+1)\kappa^2}{6}) :=S_d
          \end{eqnarray}
          Thus we have
          \begin{equation}
          	\max_{k \in 1:d} \nu_k \geq \sqrt{S_d/d}.
          \end{equation}
          Finally
          \begin{equation}
          	h = -\max_{k \in 1:d} \nu_k \leq -\sqrt{\frac{(d^3+2d^2+2d+1)}{6}}\kappa.
          \end{equation}
          For $\max_{k \in 1:d} \lambda_k$, we just use the trivial upper bound 1. We complete the proof by Theorem~\ref{thm:alternative}.
        \end{proof}          
      \end{thm}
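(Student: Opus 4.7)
The plan is to reduce to Theorem \ref{thm:alternative} applied to the LapIS estimator at the mode $\bm z_\star$ of the Randleman-Bartter integrand, i.e., to verify Assumption \ref{assum:originalboundary} and Assumption \ref{assum:eigenvalue} and then convert the latter into the stated threshold on $\alpha^2$.

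Assumption \ref{assum:originalboundary} is essentially free. Every rate $r_k(\bm z)$ is a positive exponential in the Brownian coordinates, so $\log(1+r_k)\ge 0$ and hence $g(\bm z)=-\sum_{k=0}^d\log(1+r_k)\le 0$ everywhere on $\mathbb{R}^d$. Thus $g$ is bounded above and Assumption \ref{assum:originalboundary} holds with $\beta=0$.

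The substantive work is Assumption \ref{assum:eigenvalue}. Differentiating along the standard Brownian construction \eqref{eq:STD} already produces the symmetric matrix \eqref{eq:nabla2g}: $-\nabla^2 g(\bm z_\star)$ has $(i,j)$-entry equal to the tail sum $\sum_{k=\max(i,j)}^d R_k$ with $R_k=\sigma^2 r_k(\bm z_\star)/(1+r_k(\bm z_\star))^2$, and is positive semidefinite. Because $\bm\Sigma_\star=(\bm I_d+(-\nabla^2 g(\bm z_\star)))^{-1}$ shares its eigenbasis, the eigenvalues $\lambda_j=(1+\nu_j)^{-1}$ of $\bm\Sigma_\star$ satisfy $\max_j\lambda_j\le 1$, so controlling Assumption \ref{assum:eigenvalue} reduces to controlling $\nu_{\max}=\max_k\nu_k$. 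I would avoid diagonalizing \eqref{eq:nabla2g} and instead use the normal-matrix identity $\nu_{\max}^2\le\sum_k\nu_k^2=\|-\nabla^2 g(\bm z_\star)\|_F^2=\sum_{j=1}^d(2j-1)\bigl(\sum_{k=j}^d R_k\bigr)^2$, estimate each tail sum in terms of $\kappa=\sigma^2/(2+r_\star+1/r_\star)$ using the unimodality and symmetry $f(r)=f(1/r)$ of $f(r)=\sigma^2 r/(1+r)^2$, and collapse the resulting double sum through the closed-form identity $\sum_{j=1}^d(2j-1)(d-j+1)^2=d(d^3+2d^2+2d+1)/6$. The announced bound $\alpha^2<\kappa^{-1}\sqrt{6/(d^3+2d^2+2d+1)}$ is exactly what is needed to absorb the resulting scalar estimate below $1/\alpha$ in Assumption \ref{assum:eigenvalue}.

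The main obstacle I anticipate is this eigenvalue bookkeeping: passing from the combinatorially nested form of $-\nabla^2 g(\bm z_\star)$, whose entries depend on the state-dependent quantities $R_k(\bm z_\star)$, to a single scalar in $\kappa$ and $d$ without ever diagonalizing the matrix. The Frobenius-norm route is the cleanest way I see to do this; once the estimate on $\nu_{\max}$ is in hand, the rest is a direct invocation of Theorem \ref{thm:alternative}, which already packages the RKHS, CBC and Fourier machinery needed to translate the two assumptions into the advertised RMSE rate.
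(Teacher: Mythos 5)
Your reduction to Theorem~\ref{thm:alternative}, your verification of Assumption~\ref{assum:originalboundary} via $g\le 0$, and your observation that $\max_j\lambda_j\le 1$ (because $\bm\Sigma_\star^{-1}=\bm I_d+(-\nabla^2 g(\bm z_\star))$ with $-\nabla^2 g(\bm z_\star)$ positive semidefinite) all match the paper. The divergence, and the gap, is in the eigenvalue step. You propose the chain $\nu_{\max}^2\le\sum_k\nu_k^2=\Vert -\nabla^2 g\Vert_F^2=\sum_{j=1}^d(2j-1)\bigl(\sum_{k=j}^d R_k\bigr)^2$ and then to estimate each tail sum ``in terms of $\kappa$.'' For this chain to deliver $\nu_{\max}\le\kappa\sqrt{(d^3+2d^2+2d+1)/6}$ you need the upper bound $\sum_{k=j}^d R_k\le(d-j+1)\kappa$, i.e.\ $R_k\le\kappa$ for every $k$. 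But $\kappa=\sigma^2/(2+r_\star+1/r_\star)$ with $r_\star=\min_k r_k(\bm z_\star)$ is constructed, and used in the paper, as a \emph{lower} bound: $R_k=\sigma^2/(2+r_k+1/r_k)\ge\kappa$. So the inequality you need points the wrong way; the unimodality and symmetry of $r\mapsto\sigma^2 r/(1+r)^2$ only give the universal upper bound $R_k\le\sigma^2/4$, which would produce a threshold expressed in $\sigma^2/4$ rather than in $\kappa$. Your route is sound in structure --- it is the direction one genuinely needs in order to \emph{verify} Assumption~\ref{assum:eigenvalue} --- but as written it cannot reproduce the stated constant.

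The paper goes the opposite way: it uses $R_k\ge\kappa$ to \emph{lower}-bound the Frobenius norm by $S_d$ and then extracts $\max_k\nu_k\ge\sqrt{S_d/d}$ (the maximum dominates the root mean square), i.e.\ $\min_i h_i\le-\kappa\sqrt{(d^3+2d^2+2d+1)/6}$. That reproduces the constant in the theorem statement, but it only yields a lower bound on the quantity $-\min_i h_i$ that Assumption~\ref{assum:eigenvalue} requires to be small, so combined with $-\min_i h_i\max_j\lambda_j<1/\alpha$ it gives at best a necessary condition of the stated form (compare the remark following the GLMM theorem), not a sufficient one. Be aware, then, that if you repair your upper-bound route using $R_k\le\sigma^2/4$ you will obtain a genuinely sufficient condition, but it will not coincide with the condition printed in the theorem; and there is a residual mismatch between the $1/\alpha$ appearing in Assumption~\ref{assum:eigenvalue} and the $\alpha^2$ in the stated threshold that neither your argument nor the paper's resolves.
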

      One may worry about the upper bound of $\alpha$ in Theorem~\ref{thm:RBmodel}, which decreases as the dimension $d$ grows, eventually leading $\alpha > 2$ intractable. This problem is not crucial. Firstly, the estimation of $\max_{k \in 1:d} \lambda_k$ is naive, thus improvement still remains. Secondly, if we take $\sigma=0.1$ for example, $\kappa \leq \sigma^2/4=0.025$, making $d=61$ still valid for the model, which is enough for most practical cases.
      
      We report in Figures~\ref{Figure1} and \ref{Figure2} the numerical results for dimension $d=5$ and $d=16$, respectively. The coefficients used are $r_0=0.1$, $\sigma=0.01$, $\gamma_u = \Big((|u|!)^2\prod_{i \in u} \frac{\tilde{\kappa}}{i^{\eta}}\Big)^{\frac{1}{1+\lambda}}
      $, where $\tilde{\kappa}=0.1$, $\eta=3.1$, $\lambda=0.51$ (see \cite{Graham2015}).
      
      In the setting of MC, ODIS and LapIS are more effective than not applying IS (i.e., NONE), supporting the benefits of using the two IS methods. Moreover, all of them have RMSEs decaying approximately at the canonical MC rate $O(N^{-1/2})$ as the sample size $N$ increases. The situation remains similar in the setting of RQMC. Both ODIS and LapIS converge faster than MC setting, and a nearly $O(N^{-1})$ error rate can be observed, which comforms to theoretical analysis above. As the dimension increases, although tiny fluctuation occurs, the main results keep the same. In conclusion, performing IS is efficient in this example, especially combining LapIS with randomly shifted lattice rule.
      
      \begin{figure}
      	\centering
      	\begin{center}
      		\includegraphics[height=160pt,width=240pt]{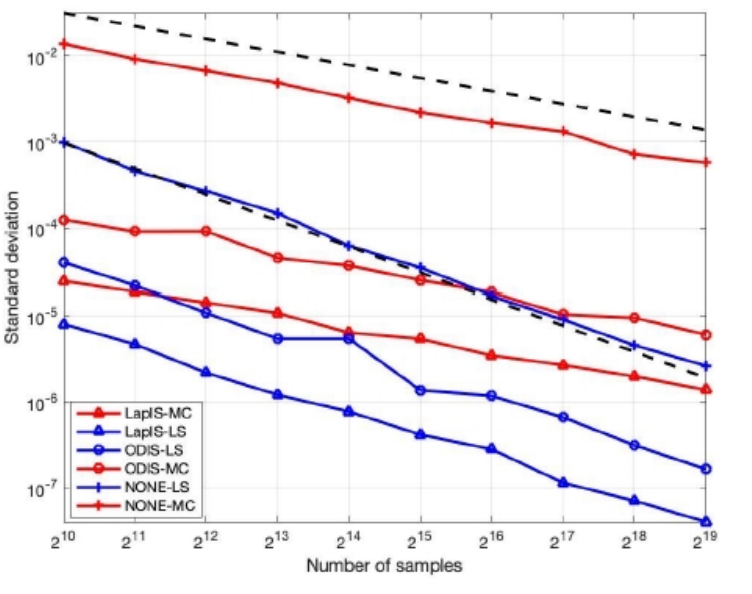}
      	\end{center}
      	\caption*{Figure1: RMSE for Randleman-Bartter short-term interest model with $d=5$.}
      	\label{Figure1}
      \end{figure}
      
      \begin{figure}
      	\centering
      	\includegraphics[height=180pt,width=270pt]{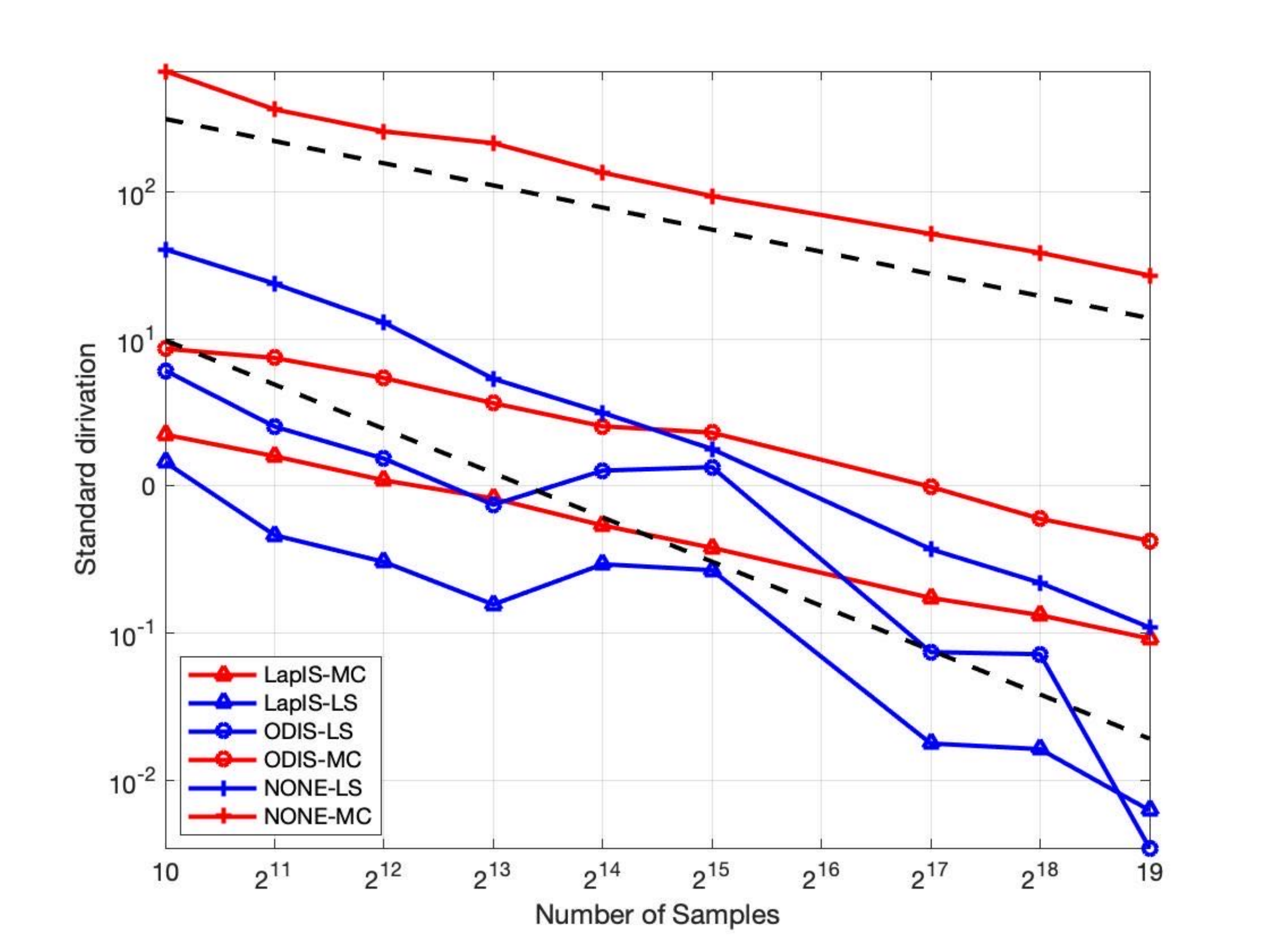}
      	\caption*{Figure2: RMSE for Randleman-Bartter short-term interest model with $d=16$.}
      	\label{Figure2}
      \end{figure}
      
	\section{Conclusion}\label{sec:con}
      This research delves into augmenting the convergence rate of the Monte Carlo (MC) method, denoted as by amalgamating quasi-Monte Carlo (QMC) techniques and importance sampling (IS). QMC, a deterministic counterpart of MC, offers an error bound of $O(\frac{(logN)^d}{N})$ for d-dimensional integrals. The adoption of randomized QMC (RQMC) enhances computational efficacy. The study underscores the synergy of IS with randomly shifted lattice rules and the intricacies in deducing a theoretical convergence rate for IS within QMC frameworks. Given the ubiquity of Gaussian measure integrals in finance and statistics, the study evaluates optimal drift importance sampling (ODIS) and Laplace importance sampling (LapIS) as significance densities. Preliminary results indicate that the IS-randomly shifted lattice rule can potentially achieve an $O(N^{-1})$. 
 error bound under certain conditions.
	
	\section{Appendix}
	In this appendix, we deduce the simple form of $\hat{\theta}(h)$. Firstly,
	\begin{eqnarray}\label{eq:invfourierbefore1}
		\hat{\theta}(h) &=& \int_0^1 \int_{\Phi^{-1}(x)}^0 \frac{\Phi(t)-x}{\psi^2(t)} \exp(-2\pi ihx)dtdx+ \int_0^1 \int_{\Phi^{-1}(1-x)}^0 \frac{\Phi(t)-1+x}{\psi^2(t)} \exp(-2\pi ihx)dtdx \nonumber \\
		&=& \int_0^1 \int_{\Phi^{-1}(x)}^0 \frac{\Phi(t)-x}{\psi^2(t)} \exp(-2\pi ihx)dtdx+ \int_0^1 \int_{\Phi^{-1}(x)}^0 \frac{\Phi(t)-x}{\psi^2(t)} \exp(2\pi ihx)dtdx \nonumber \\
		&=& 2\int_0^1 \int_{\Phi^{-1}(x)}^0 \frac{\Phi(t)-x}{\psi^2(t)} \cos(2\pi hx)dtdx. \nonumber
	\end{eqnarray}
	Split the outer integral at $\Phi(0)$, by Fubini theorem
	\begin{eqnarray}\label{eq:invfourierafter1}
		\hat{\theta}(h) &=& 2\int_0^{\Phi(0)} \int_{\Phi^{-1}(x)}^0 \frac{\Phi(t)-x}{\psi^2(t)} \cos(2\pi hx)dtdx + 2\int_{\Phi(0)}^1 \int_{\Phi^{-1}(x)}^0 \frac{\Phi(t)-x}{\psi^2(t)} \cos(2\pi hx)dtdx \nonumber \\
		&=& 2\int_{-\infty}^0 \frac{1}{\psi^2(t)}\int_0^{\Phi(t)}(\Phi(t)-x)\cos(2\pi hx)dxdt+2\int_0^{\infty} \frac{1}{\psi^2(t)}\int_{\Phi(t)}^1(x-\Phi(t))\cos(2\pi hx)dxdt \nonumber \\
		&=& 2\int_{-\infty}^0 \frac{1}{\psi^2(t)} \frac{\sin^2(\pi h\Phi(t))}{2\pi^2h^2}+2\int_0^{\infty} \frac{1}{\psi^2(t)} \frac{\sin^2(\pi h\Phi(t))}{2\pi^2h^2} dt \nonumber \\
		&=& \frac{1}{\pi^2h^2} \int_{-\infty}^{\infty} \frac{\sin^2(\pi h\Phi(t))}{\psi^2(t)} dt \nonumber \\
		&=& \frac{1}{\pi^2h^2} \int_0^1 \frac{\sin^2(\pi ht)}{\psi^2(\Phi^{-1}(t))\phi(\Phi^{-1}(t))} dt. \nonumber
	\end{eqnarray}
	\bibliographystyle{siamplain}
	\bibliography{references}
\end{document}